\newtheorem{theorem}{Theorem}
\newtheorem{proposition}{Proposition}
\newtheorem{lemma}{Lemma}
\theoremstyle{definition}
\theoremstyle{definition}\newtheorem{definition}{Definition}
\theoremstyle{definition}\newtheorem{remark}{Remark}
\newcommand{\fonction}[5]{\begin{array}[t]{lrcl}#1 :&#2 &\longrightarrow &#3\\&#4& \longmapsto &#5 \end{array}}
\def\R{\mathbb{R}}
\def\N{\mathbb{N}}
\def\AC{\mathrm{AC}}
\def\PC{\mathrm{PC}}
\def\L{\mathrm{L}}
\def\K{\mathrm{K}}
\def\KK{\mathcal{K}}
\def\CC{\mathcal{C}}
\def\FF{\mathcal{F}}
\def\GG{\mathcal{G}}
\def\MM{\mathcal{M}}
\def\NN{\mathcal{N}}
\def\S{\mathcal{S}}
\def\t{\tau}
\def\bs{\backslash}
\def\di{\displaystyle}
\def\min{{\mathrm{min}}}
\def\max{{\mathrm{max}}}
\title{Unified Riccati theory for optimal permanent and sampled-data control problems in finite and infinite time horizons}
\author{Lo\"ic Bourdin\footnote{XLIM Research Institute, UMR CNRS 7252, University of Limoges, France (\texttt{loic.bourdin@unilim.fr}).} \and
Emmanuel Tr\'elat\footnote{Sorbonne Universit\'e, CNRS, Universit\'e de Paris, Inria, Laboratoire Jacques-Louis Lions (LJLL), F-75005 Paris, France (\texttt{emmanuel.trelat@sorbonne-universite.fr}).}
}
\date{}
\begin{document}

\maketitle

\begin{abstract}
We revisit and extend the Riccati theory, unifying continuous-time linear-quadratic optimal permanent and sampled-data control problems, in finite and infinite time horizons. In a nutshell, we prove that the following diagram commutes:
\begin{equation*}
\xymatrix@R=2cm@C=4cm {
\mathrm{(SD\text{-}DRE)} \hspace{-5cm} & E^{T,\Delta} \ar[r]^{T \to +\infty} \ar[d]_{\Vert \Delta \Vert \to 0} & E^{\infty,\Delta} \ar[d]^{\Vert \Delta \Vert \to 0} & \hspace{-5cm} \mathrm{(SD\text{-}ARE)} \\
\mathrm{(P\text{-}DRE)} \hspace{-5cm} & E^T  \ar[r]_{T \to +\infty} & E^\infty & \hspace{-5cm} \mathrm{(P\text{-}ARE)}
  }
\end{equation*}  
i.e., that:
\begin{itemize}
\item[--] when the time horizon $T$ tends to $+\infty$, one passes from the Sampled-Data Difference Riccati Equation~$\mathrm{(SD\text{-}DRE)}$ to the Sampled-Data Algebraic Riccati Equation~$\mathrm{(SD\text{-}ARE)}$, and from the Permanent Differential Riccati Equation~$\mathrm{(P\text{-}DRE)}$ to the Permanent Algebraic Riccati Equation~$\mathrm{(P\text{-}ARE)}$;
\item[--] when the maximal step~$\Vert \Delta \Vert$ of the time partition~$\Delta$ tends to~$0$, one passes from~$\mathrm{(SD\text{-}DRE)}$ to~$\mathrm{(P\text{-}DRE)}$, and from~$\mathrm{(SD\text{-}ARE)}$ to~$\mathrm{(P\text{-}ARE)}$.
\end{itemize}
The notation $E$ in the above diagram (with various superscripts) refers to the solution of each of the Riccati equations listed above. Our notations and analysis provide a unified framework in order to settle all corresponding results.
\end{abstract}

\medskip

\textbf{Keywords:} optimal control; sampled-data control; linear-quadratic (LQ) problems; Riccati theory; feedback control; convergence.

\medskip

\textbf{AMS Classification:} 49J15; 49N10; 93C05; 93C57; 93C62.


\section{Introduction}
Optimal control theory is concerned with acting on controlled dynamical systems by minimizing a given criterion. We speak of a \textit{Linear-Quadratic~(LQ)} optimal control problem when the control system is a linear differential equation and the cost is given by a quadratic integral (see \cite{Kwakernaak}). One of the main results of LQ theory is that the optimal control is expressed as a linear state feedback called \textit{Linear-Quadratic Regulator~(LQR)}. The linear state feedback is described by using the \textit{Riccati matrix} which is the solution to a nonlinear backward matrix Cauchy problem in finite time horizon (DRE: Differential Riccati Equation), and to a nonlinear algebraic matrix equation in infinite time horizon (ARE: Algebraic Riccati Equation). The LQR problem is a fundamental issue in optimal control theory. Since the pioneering works by Maxwell, Lyapunov and Kalman (see the textbooks \cite{Kwakernaak, lee1986, sontag1998}), it has been extended to many contexts, among which: discrete-time~\cite{kuvera1972}, stochastic~\cite{zhu2005}, infinite-dimensional~\cite{curtain1974}, fractional~\cite{li2008}. One of these  concerns the case where controls must be piecewise constant, which is particularly important in view of engineering applications. We speak, there, of \textit{sampled-data controls} (or \textit{digital controls}), in contrast to \textit{permanent controls}. Recall that a control problem is said to be \textit{permanent} when the control function is authorized to be modified at any time. In many problems, achieving the corresponding solution trajectory requires a permanent modification of the control. However such a requirement is not conceivable in practice for human beings, even for mechanical or numerical devices. Therefore sampled-data controls, for which only a finite number of modifications is authorized over any compact time interval, are usually considered for engineering issues. The corresponding set of \textit{sampling times} (at which the control value can be modified) is called \textit{time partition}. A vast literature deals with sampled-data control systems, as evidenced by numerous references and books (see, e.g., \cite{acker,acker2,azhm,bami,chen,fada,gero,Iser,land,nesi,raga,souz,toiv,tou} and references therein). One of the first contributions on LQ optimal sampled-data control problems can be found in~\cite{kalman1958}. This field has significantly grown since the 70's, motivated by the electrical and mechanical engineering issues with applications for example to strings of vehicles (see~\cite{astrom1963, dorato1971, levis1968, levis1971, melzer1971, middleton1990, salgado1988}). Sampled-data versions of feedback controls and of Riccati equations have been derived and, like in the fully discrete-time case (see \cite[Remark~2]{liu2014}), these two concepts in the sampled-data control case have various equivalent formulations in the literature, due to different developed approaches: in most of the references, LQ optimal sampled-data control problems are recast as fully discrete-time problems, and then the feedback control and the Riccati equation are obtained by applying the discrete-time dynamical programming principle (see \cite{bini2009,dorato1971,kalman1958}) or by applying a discrete-time version of the Pontryagin maximum principle (see \cite{astrom1963,dorato1971,kleinman1966}). 

In the present paper our objective is to provide a mathematical framework in which LQ theories in the permanent and in the sampled-data case can be settled in a unified way. We build on our recent article~\cite{bourdin2017} in which we have developed a novel approach keeping the initial continuous-time formulation of the sampled-data problem, based on a sampled-data version of the Pontryagin maximum principle (see \cite{bourdin2013,bourdin2016}). Analogies between LQ optimal permanent and sampled-data controls have already been noticed in several works (see, e.g.,~\cite{salgado1988} or~\cite[Remark~5.4]{yuz2005}). In this article we gather in a unified setting the main results of LQ optimal control theory in the following four situations: permanent / sampled-data control, finite / infinite time horizon. 
To this aim, an important tool is the map~$\FF$ defined in Section~\ref{secF}, thanks to which we formulate, in the above-mentioned four situations, feedback controls and Riccati equations in Propositions~\ref{thmriccperm}, \ref{thmriccsample}, \ref{thmriccperminf} and~\ref{thmriccsampleinf} (Sections \ref{secfinitehorizon} and \ref{secinfinitehorizon}). Moreover, exploiting the continuity of $\FF$, we establish convergence results between the involved Riccati matrices, either as the length of the time partition goes to zero or as the finite time horizon goes to infinity. Four convergence results are summarized in the diagram presented in the abstract, and we refer to our main result, Theorem~\ref{thmmain1} (stated in Section~\ref{secmain}), for the complete mathematical statement. Some of the convergence results are already known, some others are new. Hence, Theorem~\ref{thmmain1} fills some gaps in the existing literature and, in some sense, it closes the loop, which is the meaning of the commutative diagram that conveys the main message of this article.

Theorem~\ref{thmmain1} is proved in Appendix~\ref{app1}.
An important role in the proof is played by the \textit{optimizability} property (or \textit{finite cost} property), which is well known in infinite time horizon problems and is related to various notions of controllability and of stabilizability (see \cite{datta2004,terrell2009,weiss2000}). For sampled-data controls, when rewriting the original problem as a fully discrete-time problem, optimizability is formulated on the corresponding discrete-time problem (see \cite[Theorem~3]{dorato1971} or~\cite[p.~348]{levis1971}). Here, we prove in the instrumental Lemma~\ref{lemimportant} that, if the permanent optimizability property is satisfied, then the sampled-data optimizability property is satisfied for all time partitions of sufficiently small length (moreover, a bound of the minimal sampled-data cost is given, uniform with respect to the length of the time partition). This lemma plays a key role in order to prove convergence of the sampled-data Riccati matrix to the permanent one in infinite time horizon when the length of the time partition goes to zero.

\section{Preliminaries on linear-quadratic optimal control problems}\label{secprelim}

Throughout the paper, given any $p \in \N^*$, we denote by $\S^p_+$ (resp., $\S^p_{++}$) the set of all symmetric positive semi-definite (resp., positive definite) matrices of $\R^{p\times p}$. Let $n$, $m \in \N^*$, let $P \in \S^n_+$, and for every $t\in\R$, let~$A(t)\in\R^{n\times n}$, $B(t)\in\R^{n\times m}$, $Q(t)\in\S^n_+$ and $R(t)\in\S^{m}_{++}$ be matrices depending continuously on $t$. Let~$\Phi(\cdot,\cdot)$ be the \textit{state-transition matrix} (\textit{fundamental matrix solution}) associated to~$A(\cdot)$ (see \cite[Appendix~C.4]{sontag1998}).

\begin{definition}\label{defautonomous}
We speak of an \textit{autonomous setting} when $A(t)\equiv A \in \R^{n\times n}$, $B(t)\equiv B \in \R^{n\times m}$, $Q(t)\equiv Q \in \S^n_{+}$ and $R(t)\equiv R \in \S^{m}_{++}$ are constant with respect to $t$.
\end{definition}

\subsection{Notations for a unified setting}\label{secF}

In this paper we consider four different LQ optimal control problems: permanent control versus sampled-data control, and finite time horizon versus infinite time horizon. To provide a unified presentation of our results (see Propositions~\ref{thmriccperm}, \ref{thmriccsample}, \ref{thmriccperminf} and~\ref{thmriccsampleinf}), we define the map 
$$ \fonction{\FF}{\R \times \S^n_+ \times \R_+}{\R^{n\times n}}{(t,E,h)}{\FF(t,E,h) := \MM(t,E,h) \NN(t,E,h)^{-1} \MM(t,E,h)^\top - \GG(t,E,h) } $$
where $\MM(t,E,h) := \MM_1 (t,E,h) + \MM_2(t,E,h)$, $\NN(t,E,h) := \NN_1(t,E,h) + \NN_2 (t,E,h) + \NN_3(t,E,h)$ and $\GG (t,E,h) := \GG_1(t,E,h) + \GG_2(t,E,h)$, with
\begin{center}
\begin{tabular}[H]{|c|c|c|}
\hline
 & if $h > 0$ & if $h=0$ \\ \hline
& & \\ 
$\MM_1(t,E,h) := $ & $\Phi(t,t-h)^\top E \left( \dfrac{1}{h} \di \int_{t-h}^t \Phi(t,\t) B(\t) \; d\t \right)$ & $EB(t)$ \\ 
& & \\ \hline
& & \\ 
$ \MM_2(t,E,h) :=$ & $\dfrac{1}{h} \di \int_{t-h}^t \Phi (\t,t-h)^\top Q(\t) \left( \int_{t-h}^\t \Phi(\t,\xi) B(\xi) \; d\xi \right) \; d\t$ & $0_{\R^{n\times m}}$ \\ 
& & \\ \hline
& & \\ 
$ \NN_1(t,E,h) := $ & $\di \dfrac{1}{h} \int_{t-h}^t R(\t) \; d\t$ & $R(t)$ \\ 
& & \\ \hline
& & \\ 
$ \NN_2(t,E,h) := $ & $\di \dfrac{1}{h} \int_{t-h}^t \left( \int_{t-h}^\t B(\xi)^\top \Phi(\t,\xi)^\top \; d\xi \right) Q(\t) \left( \int_{t-h}^\t \Phi(\t,\xi)B(\xi) \; d\xi \right)  \; d\t $ & $0_{\R^{m\times m}}$ \\ 
& & \\ \hline
& & \\ 
$ \NN_3(t,E,h) := $ & $\di \dfrac{1}{h} \left( \int^t_{t-h} B(\t)^\top \Phi(t,\t)^\top \; d\t \right) E \left( \int^t_{t-h} \Phi(t,\t)B(\t) \; d\t \right)$ & $0_{\R^{m\times m}}$ \\ 
& & \\ \hline
& & \\ 
$ \GG_1(t,E,h) := $ & $\di \dfrac{1}{h} \int_{t-h}^t \Phi (\t,t-h)^\top Q(\t) \Phi(\t,t-h) \; d\t $ & $Q(t)$ \\ 
& & \\ \hline
& & \\ 
$ \GG_2(t,E,h) := $ & $\di  \dfrac{1}{h} \Big( \Phi(t,t-h)^\top E \Phi(t,t-h) - E \Big)$ & $A(t)^\top E + E A(t)$ \\ 
& & \\ \hline
\end{tabular}
\end{center}
The map~$\FF$ is well-defined and is continuous (see Lemma~\ref{lemF} in Appendix~\ref{appprelim}). Moreover, for $h=0$, we have
$$ \FF(t,E,0) =  EB(t)R(t)^{-1} B(t)^\top E - Q(t) - A(t)^\top E - E A(t)  \qquad \forall (t,E) \in \R \times \S^n_+. $$
One recognizes here the second member of the Permanent Differential Riccati Equation (see Proposition~\ref{thmriccperm} and Remark~\ref{remanalog}). The map~$\FF$ is designed to provide a unified notation for the permanent and sampled-data control settings.

\begin{remark}
In the \textit{autonomous setting} (see Definition~\ref{defautonomous}), the state-transition matrix is $\Phi (t,\t) = e^{(t-\t)A}$ for all~$(t,\t) \in \R \times \R$ (see, e.g.,~\cite[Lemma~C.4.1]{sontag1998}) and hence in this case the map~$\FF$ does not depend on $t$, and
$$
\FF(E,h) = \MM(E,h) \NN(E,h)^{-1} \MM(E,h)^\top - \GG(E,h)\qquad \forall E \in \S^n_+ \quad\forall h\geq 0
$$
where $\MM(E,h) := \MM_1 (E,h) + \MM_2(E,h)$, $\NN(E,h) := \NN_1(E,h) + \NN_2 (E,h) + \NN_3(E,h)$ and $\GG (E,h) := \GG_1(E,h) + \GG_2(E,h)$, with
\begin{center}
\begin{tabular}[H]{|c|c|c|}
\hline
 & if $h > 0$ & if $h=0$ \\ \hline
& & \\ 
$\MM_1(E,h) := $ & $\di e^{hA^\top} E \left( \dfrac{1}{h}  \int_{0}^h e^{\t A}  \; d\t \right) B$ & $EB$ \\ 
& & \\ \hline
& & \\ 
$ \MM_2(E,h) :=$ & $\di \dfrac{1}{h} \left( \int_{0}^h e^{\t A^\top} Q \left( \int_0^\t e^{\xi A} \; d\xi \right) \; d\t \right) B$ & $0_{\R^{n\times m}}$ \\ 
& & \\ \hline
& & \\ 
$ \NN_1(E,h) := $ & $R$ & $R$ \\ 
& & \\ \hline
& & \\ 
$ \NN_2(E,h) := $ & $\di B^\top \left( \dfrac{1}{h} \int_{0}^h \left( \int_{0}^\t e^{\xi A^\top} \; d\xi \right) Q \left( \int_{0}^\t e^{\xi A} \; d\xi \right)  \; d\t \right) B $ & $0_{\R^{m\times m}}$ \\ 
& & \\ \hline
& & \\ 
$ \NN_3(E,h) := $ & $\di B^\top \left( \dfrac{1}{h} \left( \int_0^{h} e^{\t A^\top} \; d\t \right) E \left( \int_0^{h}e^{\t A} \; d\t \right) \right) B$ & $0_{\R^{m\times m}}$ \\ 
& & \\ \hline
& & \\ 
$ \GG_1(E,h) := $ & $\di \dfrac{1}{h} \int_{0}^h e^{\t A^\top} Q e^{\t A }\; d\t  $ & $Q$ \\ 
& & \\ \hline
& & \\ 
$ \GG_2(E,h) := $ & $\di \dfrac{1}{h} \Big( e^{hA^\top} E e^{hA} - E \Big)$ & $A^\top E + E A$ \\ 
& & \\ \hline
\end{tabular}
\end{center}
In particular, in the autonomous setting and for $h=0$, we have
$$ \FF(E,0) =  EBR^{-1} B^\top E - Q - A^\top E - E A   \qquad \forall E \in \S^n_+. $$
\end{remark}

\subsection{Finite time horizon: permanent / sampled-data control}\label{secfinitehorizon}

Given any $T >0$, we denote by~$\AC([0,T],\R^n)$ the space of absolutely continuous functions defined on $[0,T]$ with values in $\R^n$, and by $\L^2([0,T],\R^m)$ the Lebesgue space of square-integrable functions defined almost everywhere on $[0,T]$ with values in $\R^m$. In what follows $\L^2([0,T],\R^m)$ is the set of \textit{permanent controls}. 


A \textit{time partition} of the interval $[0,T]$ is a finite set~$\Delta = \{ t_i \}_{i=0,\ldots,N}$, with $N \in \N^*$, such that~$ 0 = t_0 < t_1 < \ldots < t_{N-1} < t_N = T $. We denote by~$\PC^\Delta ([0,T],\R^m)$ the space of functions defined on $[0,T]$ with values in $\R^m$ that are piecewise constant according to the time partition~$\Delta$, that is
$$ 
\PC^\Delta ([0,T],\R^m) := \{ u : [0,T] \to \R^m \ \mid\ u(t) = u_i\in\R^m\quad\forall t\in[t_i,t_{i+1}), \ i=0,\ldots,N-1 \}. 
$$
In what follows $\PC^\Delta([0,T],\R^m)$ is the set of \textit{sampled-data controls} according to the time partition $\Delta$ (it is a vector space of dimension $N$).
We denote by $\Vert \Delta \Vert := \max\{ h_i,\ i=1,\ldots,N\} > 0$, where $h_i := t_i - t_{i-1} > 0$ for all~$i=1,\ldots,N$. When $h_i = h$ for some~$h > 0$ for every~$i=1,\ldots,N$, the time partition $\Delta$ is said to be \textit{$h$-uniform} (which corresponds to \textit{periodic sampling}, see~\cite[Section~II.A]{bini2014}).


In this section we consider two LQ optimal control problems in finite time horizon: permanent control~$u \in \L^2([0,T],\R^m)$ (Proposition~\ref{thmriccperm}) and sampled-data control $u \in \PC^\Delta([0,T],\R^m)$ (Proposition~\ref{thmriccsample}).

\begin{proposition}[Permanent control in finite time horizon]\label{thmriccperm}
Let $T > 0$ and let $x_0 \in \R^n$. The LQ optimal permanent control problem in finite time horizon $T$ given by
\begin{equation}\tag{$\mathrm{OCP}^T_{x_0}$}
\begin{array}{rl}
\text{minimize} &  \langle P x(T) , x(T) \rangle_{\R^n} + \di \int_0^T \Big( \langle Q(\t) x(\t) , x(\t) \rangle_{\R^n} + \langle R(\t) u(\t) , u(\t) \rangle_{\R^m} \Big) \; d\t   \\[18pt]
\text{subject to} & \left\lbrace \begin{array}{l}
x \in \AC([0,T],\R^n), \qquad u \in \L^2([0,T],\R^m)  \\[8pt]
\dot{x}(t) = A(t)x(t) + B(t)u(t) \qquad \text{for a.e.}\ t \in [0,T] \\[8pt]
x(0)=x_0
\end{array} \right.
\end{array}
\end{equation}
has a unique optimal solution $(x^*,u^*)$. Moreover $u^*$ is the time-varying state feedback
$$ u^*(t) = - \NN(t,E^T(t),0)^{-1} \MM(t,E^T(t),0)^\top x^*(t) \qquad \text{for a.e.}\ t \in [0,T] $$
where $E^T: [0,T] \to \S^{n}_+$ is the unique solution to the Permanent Differential Riccati Equation $\mathrm{(P\text{-}DRE)}$ 
\begin{equation}\tag{$\mathrm{P\text{-}DRE}$}
\left\lbrace
\begin{array}{l}
\dot{E^T}(t) = \FF(t,E^T(t),0) \qquad \forall t \in [0,T] \\[5pt]
E^T(T) = P.
\end{array}
\right.
\end{equation}
Furthermore, the minimal cost of $(\mathrm{OCP}^T_{x_0})$ is equal to $ \langle E^T(0) x_0 , x_0 \rangle_{\R^{n}}$.
\end{proposition}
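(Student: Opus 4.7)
The plan is to use the classical completing-the-square argument, which simultaneously delivers existence and uniqueness of the optimal pair, the feedback form, and the value-function formula. I would proceed in three steps: (i) establish a unique solution $E^T \in \C([0,T],\S^n_+)$ to $(\mathrm{P\text{-}DRE})$; (ii) derive an algebraic identity rewriting the cost, for any admissible pair $(x,u)$, as $\langle E^T(0)x_0, x_0\rangle_{\R^n}$ plus a nonnegative quadratic residual; (iii) verify that the residual is uniquely annihilated by the feedback $u^*$ from the proposition and that the associated closed-loop Cauchy problem is well-posed.

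For step (i), the right-hand side of $(\mathrm{P\text{-}DRE})$ is quadratic in $E$ with $t$-continuous coefficients, so Cauchy--Lipschitz yields a unique maximal (symmetric) solution $E^T$ defined on some left-neighborhood of $T$. To exclude blow-up and obtain existence on all of $[0,T]$, I would pass to the associated linear Hamiltonian system
\begin{equation*}
\begin{pmatrix} \dot X(t) \\ \dot Y(t) \end{pmatrix} = \begin{pmatrix} A(t) & -B(t)R(t)^{-1}B(t)^\top \\ -Q(t) & -A(t)^\top \end{pmatrix} \begin{pmatrix} X(t) \\ Y(t) \end{pmatrix}, \qquad X(T) = I_n,\quad Y(T) = P,
\end{equation*}
whose solution $(X,Y)$ is defined on $[0,T]$ by linearity. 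A direct computation shows that $E := YX^{-1}$ satisfies $(\mathrm{P\text{-}DRE})$ wherever $X$ is invertible, so it remains to prove $X(t)$ invertible on $[0,T]$; this will follow from the positivity of $E^T$ obtained in step (ii). Concretely, one first works on a small terminal interval where $X$ is invertible, derives the identity below, deduces $E^T \in \S^n_+$, and then propagates this globally by a connectedness argument.

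For step (ii), fix any admissible pair $(x,u)$ with $x(0)=x_0$ and set $u^*(t) := -R(t)^{-1}B(t)^\top E^T(t) x(t)$ (which matches the formula of the proposition via symmetry of $E^T$ and the identities $\NN(t,E,0)=R(t)$, $\MM(t,E,0)=EB(t)$). Using $\dot x = Ax + Bu$ together with $(\mathrm{P\text{-}DRE})$ and cancelling the cross term $A^\top E^T + E^T A$ against the derivative of $x$, a direct expansion yields
\begin{equation*}
\frac{d}{dt}\langle E^T(t)x(t), x(t)\rangle_{\R^n} = -\langle Q(t)x(t),x(t)\rangle_{\R^n} - \langle R(t)u(t),u(t)\rangle_{\R^m} + \langle R(t)(u(t)-u^*(t)), u(t)-u^*(t)\rangle_{\R^m}.
\end{equation*}
Integrating over $[0,T]$ and invoking $E^T(T)=P$ rewrites the cost of $(\mathrm{OCP}^T_{x_0})$ as
\begin{equation*}
\langle E^T(0)x_0,x_0\rangle_{\R^n} + \int_0^T \langle R(t)(u(t)-u^*(t)), u(t)-u^*(t)\rangle_{\R^m}\, dt.
\end{equation*}

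For step (iii), since $R(t) \in \S^m_{++}$ the residual integral is nonnegative and vanishes iff $u = u^*$ almost everywhere; plugging $u = u^*$ into the state equation produces the linear closed-loop Cauchy problem $\dot x = (A - BR^{-1}B^\top E^T)x$ with $x(0)=x_0$, which admits a unique absolutely continuous solution $x^*$ by Cauchy--Lipschitz. This $(x^*,u^*)$ attains the infimum, which equals $\langle E^T(0)x_0,x_0\rangle_{\R^n}$, and the strict positivity of $R$ secures uniqueness. The main subtlety I anticipate is the mild circularity in step (i): the cleanest proof that the Riccati flow does not blow up uses the positivity of $E^T$, which itself is justified through the identity of step (ii). This is untangled by the local-then-global bootstrap sketched above, and I expect it to be the only genuinely delicate point of the proof.
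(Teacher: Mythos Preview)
Your proposal is correct and follows the classical completing-the-square argument. Note, however, that the paper does not supply its own proof of this proposition: it is stated as a known result and explicitly attributed to the standard references (see Remark~\ref{remanalog}, which cites \cite{bressan2007, Kwakernaak, lee1986, sontag1998, trelat2005}). Your three-step scheme---local well-posedness of the Riccati flow via the Hamiltonian lift, the cost identity obtained by differentiating $t\mapsto\langle E^T(t)x(t),x(t)\rangle_{\R^n}$, and the closed-loop verification---is precisely the argument one finds in those textbooks, so there is no methodological divergence to report. The only point worth flagging is the one you already identify: the bootstrap between non-blow-up of $E^T$ and its positive semidefiniteness is the sole delicate part, and your local-then-global continuation handles it.
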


\begin{proposition}[Sampled-data control in finite time horizon]\label{thmriccsample}
Let $T > 0$, let $\Delta = \{ t_i \}_{i=0,\ldots,N}$ be a time partition of the interval $[0,T]$ and let~$x_0 \in \R^n$. The LQ optimal sampled-data control problem in finite time horizon $T$ given by
\begin{equation}\tag{$\mathrm{OCP}^{T,\Delta}_{x_0}$}
\begin{array}{rl}
\text{minimize} &  \langle P x(T) , x(T) \rangle_{\R^n} + \di \int_0^T \Big(  \langle Q(\t) x(\t) , x(\t) \rangle_{\R^n} + \langle R(\t) u(\t) , u(\t) \rangle_{\R^m} \Big) \; d\t  \\[18pt]
\text{subject to} & \left\lbrace \begin{array}{l}
x \in \AC([0,T],\R^n), \qquad u \in \PC^\Delta([0,T),\R^m)  \\[8pt]
\dot{x}(t) = A(t)x(t) + B(t)u(t) \qquad \text{for a.e.}\  t \in [0,T] \\[8pt]
x(0)=x_0
\end{array} \right.
\end{array}
\end{equation}
has a unique optimal solution $(x^*,u^*)$. Moreover $u^*$ is the time-varying state feedback
$$ u^*_i = - \NN(t_{i+1},E^{T,\Delta}_{i+1},h_{i+1})^{-1} \MM(t_{i+1},E^{T,\Delta}_{i+1},h_{i+1} )^\top x^*(t_i) \qquad \forall i=0,\ldots,N-1 $$
where $E^{T,\Delta} = (E^{T,\Delta}_i)_{i=0,\ldots,N} \subset \S^{n}_+$ is the unique solution to the Sampled-Data Difference Riccati Equation~$\mathrm{(SD\text{-}DRE)}$ 
\begin{equation}\tag{$\mathrm{SD\text{-}DRE}$}
\left\lbrace
\begin{array}{l}
E^{T,\Delta}_{i+1}-E^{T,\Delta}_i = h_{i+1} \FF (t_{i+1},E^{T,\Delta}_{i+1},h_{i+1}) \qquad \forall i=0,\ldots,N-1 \\[5pt]
E^{T,\Delta}_N = P.
\end{array}
\right.
\end{equation}
Furthermore, the minimal cost of $(\mathrm{OCP}^{T,\Delta}_{x_0})$ is equal to $ \langle E^{T,\Delta}_0 x_0 , x_0 \rangle_{\R^{n}}$.
\end{proposition}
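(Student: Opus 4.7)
The plan is to reduce the continuous-time sampled-data LQ problem to a fully discrete-time LQ problem on the grid $\Delta$ and then apply backward dynamic programming. Any admissible control in $\PC^\Delta([0,T),\R^m)$ is parametrised by $(u_0,\ldots,u_{N-1})\in(\R^m)^N$ with $u(t)\equiv u_i$ on $[t_i,t_{i+1})$, and integrating $\dot x = A(t)x + B(t)u$ over $[t_i,t_{i+1}]$ via the state-transition matrix yields
\[
x(\tau)=\Phi(\tau,t_i)x_i+\Bigl(\int_{t_i}^\tau\Phi(\tau,\xi)B(\xi)\,d\xi\Bigr)u_i,\qquad x_{i+1}=A_i^\Delta x_i+B_i^\Delta u_i,
\]
with $A_i^\Delta:=\Phi(t_{i+1},t_i)$ and $B_i^\Delta:=\int_{t_i}^{t_{i+1}}\Phi(t_{i+1},\tau)B(\tau)\,d\tau$. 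Substituting these expressions into the running cost on each subinterval, the objective of $(\mathrm{OCP}^{T,\Delta}_{x_0})$ rewrites as $\langle Px_N,x_N\rangle+\sum_{i=0}^{N-1}\bigl(\langle Q_i^\Delta x_i,x_i\rangle+2\langle S_i^\Delta x_i,u_i\rangle+\langle R_i^\Delta u_i,u_i\rangle\bigr)$, where $Q_i^\Delta\in\S^n_+$, $R_i^\Delta\in\S^m_{++}$ and $S_i^\Delta$ are explicit integrals of $\Phi$, $B$, $Q$ and $R$ over $[t_i,t_{i+1}]$. Existence and uniqueness of the optimal pair $(x^*,u^*)$ are then immediate, since the reduced cost is a strictly convex, coercive quadratic function of the finite-dimensional argument $(u_0,\ldots,u_{N-1})$.

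Next I apply the discrete dynamic programming principle with the quadratic ansatz $V_i(x):=\langle E^{T,\Delta}_i x,x\rangle$ for the value function at stage $i$, with $E^{T,\Delta}_N=P$. The Bellman equation reads
\[
\langle E^{T,\Delta}_i x,x\rangle=\min_{u\in\R^m}\Bigl[\langle Q_i^\Delta x,x\rangle+2\langle S_i^\Delta x,u\rangle+\langle R_i^\Delta u,u\rangle+\langle E^{T,\Delta}_{i+1}(A_i^\Delta x+B_i^\Delta u),A_i^\Delta x+B_i^\Delta u\rangle\Bigr].
\]
Its quadratic coefficient in $u$ is $R_i^\Delta+(B_i^\Delta)^\top E^{T,\Delta}_{i+1}B_i^\Delta\in\S^m_{++}$ (using the induction assumption $E^{T,\Delta}_{i+1}\in\S^n_+$), so completing the square yields both an explicit optimal feedback $u_i^*$ linear in $x^*(t_i)$ and a backward recursion for $E^{T,\Delta}_i$; positive semi-definiteness of $E^{T,\Delta}_i$ is inherited from the nonnegativity of the cost-to-go.

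The main, essentially bookkeeping, step is to match the coefficient matrices just obtained with the blocks defining $\FF$ in Section~\ref{secF}. Setting $h=h_{i+1}$, $t=t_{i+1}$ and $E=E^{T,\Delta}_{i+1}$ in the table, one verifies by direct inspection that the combined quadratic form (running cost plus Bellman expansion of $\langle E^{T,\Delta}_{i+1}x_{i+1},x_{i+1}\rangle$) regroups exactly as $\langle(h\GG+E)x,x\rangle+2\langle h\MM^\top x,u\rangle+\langle h\NN u,u\rangle$, with the $x$-only part splitting as $h\GG_1=Q_i^\Delta$ from the running cost and $(A_i^\Delta)^\top E A_i^\Delta = h\GG_2+E$ from Bellman; the cross part splitting as $h\MM_2^\top = S_i^\Delta$ and $(B_i^\Delta)^\top E A_i^\Delta = h\MM_1^\top$; and the $u$-only part splitting as $h\NN_1+h\NN_2=R_i^\Delta$ and $(B_i^\Delta)^\top E B_i^\Delta = h\NN_3$. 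Completing the square then gives $u_i^*=-\NN^{-1}\MM^\top x^*(t_i)$ and the recursion $E^{T,\Delta}_i=E^{T,\Delta}_{i+1}-h_{i+1}\FF(t_{i+1},E^{T,\Delta}_{i+1},h_{i+1})$, i.e.\ exactly $\mathrm{(SD\text{-}DRE)}$; the minimal cost identity $\langle E^{T,\Delta}_0 x_0,x_0\rangle$ follows by evaluating $V_0$ at $x_0$. The main obstacle is not conceptual but organisational: carefully unfolding each term arising from the expansion of $\langle Q(\tau)x(\tau),x(\tau)\rangle$ and $\langle E^{T,\Delta}_{i+1}x_{i+1},x_{i+1}\rangle$ so that every contribution lands in its predicted slot of the $\FF$-table.
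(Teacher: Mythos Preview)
Your proof is correct. The reduction to a discrete-time LQ problem via the Duhamel formula, followed by backward dynamic programming with a quadratic value function, is exactly one of the classical routes the paper cites in Remark~\ref{remanalog} (attributed to Kalman--Koepcke, Dorato--Levis, Bini). Your identification of the blocks $Q_i^\Delta$, $S_i^\Delta$, $R_i^\Delta$ and of the Bellman expansion terms with $h\GG_1$, $h\GG_2+E$, $h\MM_1$, $h\MM_2$, $h\NN_1+h\NN_2$, $h\NN_3$ is accurate, and completing the square yields precisely the feedback law and the $\mathrm{(SD\text{-}DRE)}$ recursion as stated.

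The paper itself does not give a self-contained proof of this proposition; it treats the result as known and points to the literature. Its preferred derivation, however, is the one from~\cite{bourdin2017}, which keeps the continuous-time formulation and applies a sampled-data version of the Pontryagin maximum principle rather than recasting the problem as fully discrete. That route avoids introducing the intermediate discrete matrices $A_i^\Delta$, $B_i^\Delta$, $Q_i^\Delta$, $R_i^\Delta$, $S_i^\Delta$ and instead derives the feedback directly from the Hamiltonian stationarity condition averaged over each sampling interval; the Riccati recursion then appears via the costate. Your dynamic-programming argument is arguably more elementary and makes the positive semi-definiteness of $E^{T,\Delta}_i$ transparent (as the cost-to-go), whereas the Pontryagin route fits more naturally into the paper's unified continuous-time viewpoint and parallels the proof given for Proposition~\ref{thmriccsampleinf} in Appendix~\ref{appthmriccsampleinf}.
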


\begin{remark}\label{remanalog}
The mathematical contents of Propositions~\ref{thmriccperm} and~\ref{thmriccsample} are not new. The time-varying state feedback~$u^*$ in Proposition~\ref{thmriccperm} is usually written as
$$
u^*(t) = - R(t)^{-1} B(t)^\top E^T(t) x^*(t) \qquad \text{for a.e.}\  t \in [0,T] 
$$
and $\mathrm{(P\text{-}DRE)}$ is usually written as
\begin{equation*}
\left\lbrace
\begin{array}{l}
\dot{E^T}(t) = E^T(t) B(t) R(t)^{-1} B(t)^\top E^T(t) - Q(t) - A(t)^\top E^T(t) - E^T(t)A(t) \qquad \forall t \in [0,T] \\[5pt]
E^T(T) = P
\end{array}
\right.
\end{equation*}
(see \cite{bressan2007, Kwakernaak, lee1986, sontag1998, trelat2005}). Like in the fully discrete-time case~\cite[Remark~2]{liu2014}, the analogous results in the sampled-data control case have various equivalent formulations in the literature. 
Using the Duhamel formula, 
Problem~$(\mathrm{OCP}^{T,\Delta}_{x_0})$ can be recast as a fully discrete-time linear-quadratic optimal control problem. In this way, the time-varying state feedback control $u^*$ in Proposition~\ref{thmriccsample} and $\mathrm{(SD\text{-}DRE)}$ were first obtained in~\cite{kalman1958} by applying the discrete-time dynamical programming principle (method revisited in~\cite[p.~616]{dorato1971} or more recently in~\cite[Theorem~4.1]{bini2009}), while they are derived in~\cite[Appendix~B]{astrom1963} or in \cite[p.~618]{dorato1971} by applying a discrete-time version of the Pontryagin maximum principle (see \cite{kleinman1966}). 
In Theorem~\ref{thmmain1} hereafter, we are going to prove convergence of $E^{T,\Delta}$ to $E^T$ when $\Vert \Delta \Vert \to 0$.
\end{remark}

\subsection{Infinite time horizon: permanent / sampled-data control (autonomous setting and uniform time partition)}\label{secinfinitehorizon}
This section is dedicated to the infinite time horizon case. We denote by~$\AC([0,+\infty),\R^n)$ the space of functions defined on~$[0,+\infty)$ with values in $\R^n$ which are absolutely continuous over all intervals~$[0,T]$ with~$T> 0$, and by $\L^2([0,+\infty),\R^m)$ the Lebesgue space of square-integrable functions defined almost everywhere on $[0,+\infty)$ with values in $\R^m$. Assume that we are in the autonomous setting (see Definition~\ref{defautonomous}). We consider the following assumptions:
\begin{enumerate}
\item[$\mathrm{(H_1)}$] $Q \in \S^n_{++}$.
\item[$\mathrm{(H_2)}$] For every $x_0 \in \R^n$, there exists a pair $(x,u) \in \AC([0,+\infty),\R^n) \times \L^2 ([0,+\infty),\R^m)$ such that $\dot{x}(t) = Ax(t) + Bu(t)$ for almost every $t \geq 0$ and~$x(0)=x_0$, satisfying
$$ \di \int_0^{+\infty} \Big( \langle Q x(\t) , x(\t) \rangle_{\R^n} + \langle R u(\t) , u(\t) \rangle_{\R^m} \Big) \; d\t < +\infty. $$
\end{enumerate}
Assumption~$\mathrm{(H_2)}$ is known in the literature as \textit{optimizability} assumption (or \textit{finite cost} assumption) and is related to various notions of \textit{stabilizability} of linear permanent control systems (see \cite{weiss2000}). A wide literature is dedicated to this topic (see \cite{terrell2009} and references mentioned in \cite[Section~10.10]{datta2004}). Recall that, if the pair~$(A,B)$ satisfies the  Kalman condition (see \cite[Theorem~1.2]{zabczyk2008}) or only the weaker Popov-Belevitch-Hautus test condition (see \cite[Theorem~6.2]{terrell2009}) then~$\mathrm{(H_2)}$ is satisfied. 

\medskip

Let $h > 0$. The $h$-uniform time partition of the interval~$[0,+\infty)$ is the sequence $\Delta = \{ t_i \}_{i \in \N}$, where~$t_i := ih$ for every~$i \in \N$. We denote by $\Vert \Delta \Vert = h$ and by~$\PC^\Delta ([0,+\infty),\R^m)$ the space of functions defined on $[0,+\infty)$ with values in $\R^m$ that are piecewise constant according to the time partition $\Delta$, that is
$$ \PC^\Delta ([0,+\infty),\R^m) := \{ u  : [0,+\infty) \to \R^m\ \mid\ u(t) = u_i\quad \forall t \in [t_i,t_{i+1}), \ i\in\N  \}.   $$
We also consider the following assumption that we call \textit{$h$-optimizability} assumption:
\begin{enumerate}
\item[$\mathrm{(H}_2^h\mathrm{)}$] For every $x_0 \in \R^n$, there exists a pair $(x,u) \in \AC([0,+\infty),\R^n) \times \PC^\Delta ([0,+\infty),\R^m)$ such that $\dot{x}(t) = Ax(t) + Bu(t)$ for almost every $t \geq 0$ and $x(0)=x_0$, satisfying
$$ \int_0^{+\infty} \Big( \langle Q x(\t) , x(\t) \rangle_{\R^n} + \langle R u(\t) , u(\t) \rangle_{\R^m} \Big) \; d\t < +\infty. $$
\end{enumerate}
Obviously, if $\mathrm{(H}_2^h\mathrm{)}$ is satisfied for some $h > 0$ then $\mathrm{(H_2)}$ is satisfied. In other words, $\mathrm{(H}_2^h\mathrm{)}$ for a given $h>0$ is stronger than~$\mathrm{(H}_2\mathrm{)}$. Conversely, we will prove in Lemma~\ref{lemimportant} further that, if $\mathrm{(H_1)}$ and~$\mathrm{(H_2)}$ are satisfied, then there exists~$\overline{h} > 0$ such that $\mathrm{(H}_2^h\mathrm{)}$ is satisfied for every $h\in(0,\overline{h}]$.

\medskip

In this section, in the autonomous setting (see Definition~\ref{defautonomous}), we consider two infinite time horizon LQ optimal control problems: permanent control~$u \in \L^2([0,+\infty),\R^m)$ (Proposition~\ref{thmriccperminf}) and sampled-data control~$u \in \PC^\Delta([0,+\infty),\R^m)$ (Proposition~\ref{thmriccsampleinf}).

\begin{proposition}[Permanent control in infinite time horizon]\label{thmriccperminf}
Assume that we are in the autonomous setting (see Definition~\ref{defautonomous}). Let $x_0 \in \R^n$. Under Assumptions $\mathrm{(H_1)}$ and $\mathrm{(H_2)}$, the LQ optimal permanent control problem in infinite time horizon given by
\begin{equation}\tag{$\mathrm{OCP}^\infty_{x_0}$}
\begin{array}{rl}
\text{minimize} & \di \int_0^{+\infty} \Big( \langle Q x(\t) , x(\t) \rangle_{\R^n} + \langle R u(\t) , u(\t) \rangle_{\R^m} \Big) \; d\t  \\[18pt]
\text{subject to} & \left\lbrace \begin{array}{l}
x \in \AC([0,+\infty),\R^n), \quad u \in \L^2([0,+\infty),\R^m) \\[8pt]
\dot{x}(t) = Ax(t) + Bu(t) \qquad \text{for a.e.}\  t \geq 0 \\[8pt]
x(0)=x_0
\end{array} \right.
\end{array}
\end{equation}
has a unique optimal solution $(x^*,u^*)$. Moreover $u^*$ is the state feedback
$$ u^*(t) = - \NN(E^\infty,0)^{-1} \MM(E^\infty,0) ^\top x^*(t) \qquad \text{for a.e.}\  t \geq 0 $$
where $E^\infty \in \S^{n}_{++}$ is the unique solution to the Permanent Algebraic Riccati Equation~$\mathrm{(P\text{-}ARE)}$ 
\begin{equation}\tag{$\mathrm{P\text{-}ARE}$}
\left\lbrace
\begin{array}{l}
\FF(E^\infty,0) = 0_{\R^{n\times n}}  \\[5pt]
E^\infty \in \S^n_{+}.
\end{array}
\right.
\end{equation}
Furthermore, the minimal cost of $(\mathrm{OCP}^\infty_{x_0})$ is equal to $ \langle E^\infty x_0 , x_0 \rangle_{\R^{n}}$.
\end{proposition}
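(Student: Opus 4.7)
The plan is to deduce Proposition~\ref{thmriccperminf} from the finite-horizon Proposition~\ref{thmriccperm} by a limiting procedure. For each $T > 0$, I consider the finite-horizon problem $(\mathrm{OCP}^T_{x_0})$ with zero terminal matrix $P = 0_{\R^{n \times n}}$, whose minimal cost, by Proposition~\ref{thmriccperm}, equals $\langle E^T(0) x_0, x_0 \rangle_{\R^n}$, where $E^T$ solves $(\mathrm{P\text{-}DRE})$ with $E^T(T) = 0$. First I would establish that the family $\{ E^T(0) \}_{T > 0}$ is non-decreasing and uniformly bounded in the Loewner order of $\S^n_+$: monotonicity comes from the fact that any admissible pair of $(\mathrm{OCP}^{T_2}_{x_0})$ restricts to an admissible pair of $(\mathrm{OCP}^{T_1}_{x_0})$ with cost no larger when $T_1 < T_2$ (the running cost being nonnegative), while uniform boundedness follows by restricting to $[0, T]$ a globally admissible pair of finite infinite-horizon cost, as provided by Assumption~$\mathrm{(H_2)}$. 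Applying monotone convergence of the scalar quantities $\langle E^T(0) x_0, x_0 \rangle_{\R^n}$ and polarizing, I obtain a limit $E^\infty \in \S^n_+$ with $E^T(0) \to E^\infty$ as $T \to +\infty$.

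Next I would pass to the limit in $(\mathrm{P\text{-}DRE})$ using translation invariance, available precisely because of the autonomous setting. For each fixed $s \geq 0$, time-shift invariance yields $E^T(s) = E^{T - s}(0)$ for $T \geq s$, so $E^T(s) \to E^\infty$ as $T \to +\infty$. Integrating $(\mathrm{P\text{-}DRE})$ between $0$ and $s$ gives
\begin{equation*}
E^T(s) - E^T(0) = \int_0^s \FF(E^T(\tau), 0)\, d\tau,
\end{equation*}
the left-hand side tends to $0$, while, by continuity of $\FF$ (Lemma~\ref{lemF}) and uniform boundedness of $\{ E^T(\tau) \}$ on $[0, s]$, the right-hand side tends to $s\, \FF(E^\infty, 0)$. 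Since $s > 0$ is arbitrary, this forces $\FF(E^\infty, 0) = 0_{\R^{n \times n}}$, that is, $(\mathrm{P\text{-}ARE})$. Strict positive-definiteness $E^\infty \in \S^n_{++}$ then follows from $\mathrm{(H_1)}$: if $\langle E^\infty x_0, x_0 \rangle_{\R^n} = 0$, monotonicity implies $\langle E^T(0) x_0, x_0 \rangle_{\R^n} = 0$ for every $T$, hence the optimal finite-horizon trajectory $x^T$ satisfies $\int_0^T \langle Q x^T, x^T \rangle_{\R^n}\, d\tau = 0$, and positive-definiteness of $Q$ forces $x^T \equiv 0$, so $x_0 = 0$ by absolute continuity.

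The third step is to identify $(x^*, u^*)$ and justify optimality. The closed-loop system $\dot{x}^* = (A - B R^{-1} B^\top E^\infty) x^*$ with $x^*(0) = x_0$ admits a unique absolutely continuous solution on $[0, +\infty)$, and I would exhibit optimality by means of the classical completing-the-square identity: for any admissible pair $(x, u)$, differentiating $t \mapsto \langle E^\infty x(t), x(t) \rangle_{\R^n}$ and using $(\mathrm{P\text{-}ARE})$ produces, over every horizon $T > 0$,
\begin{equation*}
\int_0^T \bigl( \langle Q x, x \rangle_{\R^n} + \langle R u, u \rangle_{\R^m} \bigr)\, d\tau = \langle E^\infty x_0, x_0 \rangle_{\R^n} - \langle E^\infty x(T), x(T) \rangle_{\R^n} + \int_0^T \langle R v, v \rangle_{\R^m}\, d\tau,
\end{equation*}
where $v := u + R^{-1} B^\top E^\infty x$. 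Since the Lyapunov identity $(A - BK)^\top E^\infty + E^\infty (A - BK) = -Q - K^\top R K$ with $K := R^{-1} B^\top E^\infty$ and $Q \in \S^n_{++}$ forces $A - BK$ to be Hurwitz, for any admissible pair of finite infinite-horizon cost one has $\langle E^\infty x(T), x(T) \rangle_{\R^n} \to 0$; letting $T \to +\infty$ yields that the total cost equals $\langle E^\infty x_0, x_0 \rangle_{\R^n}$ plus a nonnegative deviation vanishing exactly for $v \equiv 0$, which proves both optimality and uniqueness of $(x^*, u^*)$. Uniqueness of $E^\infty \in \S^n_+$ solving $(\mathrm{P\text{-}ARE})$ follows by applying the same identity to any candidate solution $E$ and comparing the resulting admissible cost $\langle E x_0, x_0 \rangle_{\R^n}$ with the minimal cost in both directions.

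The main obstacle I anticipate is establishing $\langle E^\infty x(T), x(T) \rangle_{\R^n} \to 0$ along \emph{arbitrary} admissible trajectories with finite infinite-horizon cost: this is precisely what promotes the finite-horizon completing-the-square identity into a genuine infinite-horizon optimality statement, and it is here that Assumption~$\mathrm{(H_1)}$ is essentially used via the Hurwitz character of the closed-loop matrix.
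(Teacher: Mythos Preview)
Your proposal is correct and follows essentially the classical approach that the paper cites for Proposition~\ref{thmriccperminf} (and mirrors in its own proof of Proposition~\ref{thmriccsampleinf} in Appendix~\ref{appthmriccsampleinf}): obtain $E^\infty$ as the monotone limit of the finite-horizon Riccati values $E^T(0)$ with $P=0$, use autonomy and continuity of~$\FF$ to pass to the limit and get $\FF(E^\infty,0)=0$, then conclude via the completing-the-square identity.

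One correction is in order. In your final two paragraphs you cite the Hurwitz character of $A-BK$ (with $K=R^{-1}B^\top E^\infty$) as the reason why $\langle E^\infty x(T),x(T)\rangle_{\R^n}\to 0$ along \emph{arbitrary} admissible trajectories of finite cost. That is a non sequitur: the Hurwitz property governs only the closed-loop trajectory $x^*$. For an arbitrary finite-cost pair $(x,u)$ the correct argument is the one recorded as Lemma~\ref{leminfzero} in the paper, and it uses~$\mathrm{(H_1)}$ directly: $Q\in\S^n_{++}$ together with finiteness of the cost forces $x\in\L^2([0,+\infty),\R^n)$, and since $u\in\L^2$ as well, the derivative of $t\mapsto\Vert x(t)\Vert_{\R^n}^2$ is integrable, so $\Vert x(t)\Vert_{\R^n}^2$ has a limit at infinity which must be~$0$. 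The Hurwitz property (or equivalently a second application of Lemma~\ref{leminfzero} after bounding the closed-loop cost by $\langle E^\infty x_0,x_0\rangle_{\R^n}$ from the identity with $v\equiv 0$) handles $x^*(T)\to 0$. With this reattribution of tools your argument is complete.
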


\begin{proposition}[Sampled-data control in infinite time horizon]\label{thmriccsampleinf}
Assume that we are in the autonomous setting (see Definition~\ref{defautonomous}). Let $\Delta = \{ t_i \}_{i \in \N} $ be a $h$-uniform time partition of the interval~$[0,+\infty)$ and let~$x_0 \in \R^n$. Under Assumptions~$\mathrm{(H_1)}$ and $\mathrm{(H}^h_2\mathrm{)}$, the LQ optimal sampled-data control problem in infinite time horizon given by
\begin{equation}\tag{$\mathrm{OCP}^{\infty,\Delta}_{x_0}$}
\begin{array}{rl}
\text{minimize} & \di \int_0^{+\infty} \Big( \langle Q x(\t) , x(\t) \rangle_{\R^n} + \langle R u(\t) , u(\t) \rangle_{\R^m} \Big) \; d\t  \\[18pt]
\text{subject to} & \left\lbrace \begin{array}{l}
x \in \AC([0,+\infty),\R^n), \quad u \in \PC^\Delta([0,+\infty),\R^m)  \\[8pt]
\dot{x}(t) = Ax(t) + Bu(t) \qquad \text{for a.e.}\  t \geq 0 \\[8pt]
x(0)=x_0
\end{array} \right.
\end{array}
\end{equation}
has a unique optimal solution $(x^*,u^*)$. Moreover $u^*$ is the state feedback
$$ u^*_i = - \NN(E^{\infty,\Delta},h)^{-1} \MM(E^{\infty,\Delta},h)^\top x^*(t_i) \qquad \forall i \in \N $$
where $E^{\infty,\Delta} \in \S^{n}_{++}$ is the unique solution to the Sampled-Data Algebraic Riccati Equation~$\mathrm{(SD\text{-}ARE)}$ 
\begin{equation}\tag{$\mathrm{SD\text{-}ARE}$}
\left\lbrace
\begin{array}{l}
\FF(E^{\infty,\Delta},h) = 0_{\R^{n\times n}} \\[5pt]
E^{\infty,\Delta} \in \S^n_{+}.
\end{array}
\right.
\end{equation}
Furthermore, the minimal cost of $(\mathrm{OCP}^{\infty,\Delta}_{x_0})$ is equal to $ \langle E^{\infty,\Delta} x_0 , x_0 \rangle_{\R^{n}}$.
\end{proposition}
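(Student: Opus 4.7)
The plan is to deduce the infinite horizon statement from the finite horizon Proposition~\ref{thmriccsample} by a monotone-limit argument, and then to identify the limit by a discrete completion-of-squares computation. For each $N \in \N^*$, I apply Proposition~\ref{thmriccsample} to $(\mathrm{OCP}^{Nh,\Delta}_{x_0})$ with terminal cost $P = 0_{\R^{n\times n}}$, and denote the associated solution of $(\mathrm{SD\text{-}DRE})$ by $(E^{(N)}_i)_{i=0,\ldots,N}$. In the autonomous setting with an $h$-uniform partition, the recursion $E^{(N)}_i = E^{(N)}_{i+1} - h\FF(E^{(N)}_{i+1}, h)$ depends only on the lag $N-i$, so $F_N := E^{(N)}_0$ is obtained by iterating the map $\Psi(E) := E - h\FF(E, h)$ starting from $0_{\R^{n\times n}}$. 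The map $\Psi$ is well-defined on $\S^n_+$, since $\NN(E,h) \in \S^m_{++}$ (its summand $\NN_1(E,h)=R$ is in $\S^m_{++}$ and $\NN_2(E,h),\NN_3(E,h) \in \S^m_+$).

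Monotonicity and boundedness of $(F_N)_N$ follow from its variational meaning $\langle F_N x_0, x_0\rangle_{\R^n} = \min (\mathrm{OCP}^{Nh,\Delta}_{x_0})$ with $P=0$. Extending any admissible control by $0$ on the added interval shows that this minimum is nondecreasing in $N$, hence $F_N \preceq F_{N+1}$ in the Loewner order. Conversely, $\mathrm{(H}^h_2\mathrm{)}$ supplies, for every $x_0 \in \R^n$, a sampled-data pair whose infinite-horizon cost $C(x_0)$ dominates every finite-horizon minimum, so $\langle F_N x_0, x_0\rangle \leq C(x_0)$ uniformly in $N$. Polarization yields a uniform bound on $\|F_N\|$, and monotone convergence in $\S^n_+$ produces a limit $E^{\infty,\Delta} \in \S^n_+$. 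Passing to the limit in $F_{N+1} = \Psi(F_N)$ via the continuity of $\FF$ (Lemma~\ref{lemF}) gives $\FF(E^{\infty,\Delta}, h) = 0_{\R^{n\times n}}$, so $E^{\infty,\Delta}$ solves $(\mathrm{SD\text{-}ARE})$.

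To identify $E^{\infty,\Delta}$ as the value function and to prove optimality of the feedback, I would establish a discrete completion-of-squares identity: for any admissible pair $(x,u)$ for $(\mathrm{OCP}^{\infty,\Delta}_{x_0})$ and with $u^*_i := -\NN(E^{\infty,\Delta},h)^{-1}\MM(E^{\infty,\Delta},h)^\top x(t_i)$, a direct calculation using $\FF(E^{\infty,\Delta},h)=0$ together with the Duhamel formula on each slab $[t_i,t_{i+1}]$ yields
\begin{equation*}
\langle E^{\infty,\Delta} x(t_i), x(t_i)\rangle_{\R^n} - \langle E^{\infty,\Delta} x(t_{i+1}), x(t_{i+1})\rangle_{\R^n} + \int_{t_i}^{t_{i+1}}\!\!\bigl(\langle Qx,x\rangle_{\R^n} + \langle Ru,u\rangle_{\R^m}\bigr)\,d\t = \langle \NN(E^{\infty,\Delta},h)(u_i-u^*_i),u_i-u^*_i\rangle_{\R^m}.
\end{equation*}
Summing from $i=0$ to $N-1$ and discarding the nonnegative term $\langle E^{\infty,\Delta} x(t_N), x(t_N) \rangle_{\R^n}$ yields the lower bound $\langle E^{\infty,\Delta} x_0, x_0\rangle_{\R^n} \leq \mathrm{cost}([0,Nh])$, hence $\langle E^{\infty,\Delta} x_0, x_0\rangle_{\R^n}$ is a lower bound for the infinite-horizon cost. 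Taking $u=u^*$ as a state feedback collapses the right-hand side to zero, making the feedback cost equal to $\langle E^{\infty,\Delta} x_0,x_0\rangle_{\R^n}-\lim_N \langle E^{\infty,\Delta} x(t_N),x(t_N)\rangle_{\R^n}$. Finiteness of this cost together with $\mathrm{(H_1)}$ forces $x \in \L^2([0,+\infty),\R^n)$, then $\dot x=Ax+Bu \in \L^2$, so $x$ lies in $\mathrm{H}^1$ and decays to $0$ at infinity; the feedback is thus admissible, optimal and realises the announced minimum. Uniqueness of the optimizer comes from $\NN(E^{\infty,\Delta},h) \in \S^m_{++}$, and uniqueness of $E^{\infty,\Delta}$ in $\S^n_+$ from applying the same identity symmetrically to any other solution $\tilde E \in \S^n_+$, which forces $\langle \tilde E x_0, x_0\rangle_{\R^n} = \langle E^{\infty,\Delta} x_0, x_0\rangle_{\R^n}$ for every $x_0$. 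Finally, $E^{\infty,\Delta} \in \S^n_{++}$ is immediate from $\mathrm{(H_1)}$, since $\langle E^{\infty,\Delta}x_0,x_0\rangle_{\R^n}=0$ would force an optimal trajectory with $Qx \equiv 0$, hence $x\equiv 0$ and $x_0=0$. The principal technical obstacle is the passage from finite integral cost to the decay $x(t_N) \to 0$ that legitimises the telescoping, and this is precisely where $\mathrm{(H_1)}$ is essential.
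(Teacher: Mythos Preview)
Your overall strategy coincides with the paper's: build $E^{\infty,\Delta}$ as the monotone limit of the finite-horizon Riccati matrices $F_N$ (using $\mathrm{(H}^h_2\mathrm{)}$ for boundedness and continuity of $\FF$ to pass to the limit), then identify it with the value function via a telescoping/completion-of-squares argument, and extract positive definiteness and uniqueness from $\mathrm{(H_1)}$. The paper additionally opens with a separate existence/uniqueness proof for the optimizer by the direct method (weak $\L^2$ compactness, weak closedness of $\PC^\Delta$, Fatou, strict convexity), whereas you produce the optimizer directly as the feedback; both routes are fine.

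There is, however, a genuine sign slip in your key identity that derails the lower-bound step as you wrote it. The correct completion of squares (which is what the paper's computation with $W_1,W_2$ encodes) reads, for any admissible pair and with $E:=E^{\infty,\Delta}$,
\[
\int_{t_i}^{t_{i+1}}\!\!\bigl(\langle Qx,x\rangle_{\R^n}+\langle Ru,u\rangle_{\R^m}\bigr)\,d\tau
\;+\;\langle E x(t_{i+1}),x(t_{i+1})\rangle_{\R^n}-\langle E x(t_i),x(t_i)\rangle_{\R^n}
\;=\;h\,\langle \NN(E,h)(u_i-u_i^*),\,u_i-u_i^*\rangle_{\R^m},
\]
i.e., the telescoping term enters with the \emph{opposite} sign to what you wrote (and there is a factor $h$ on the right). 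Summing now gives
\[
\mathrm{cost}([0,t_N])=\langle E x_0,x_0\rangle_{\R^n}-\langle E x(t_N),x(t_N)\rangle_{\R^n}+\sum_{i=0}^{N-1}h\,\langle \NN(E,h)(u_i-u_i^*),\,u_i-u_i^*\rangle_{\R^m}.
\]
Consequently the ``discard the nonnegative term $\langle E x(t_N),x(t_N)\rangle$'' trick yields the \emph{upper} bound for the feedback (take $u=u^*$, penalty vanishes, and $\mathrm{cost}\leq\langle E x_0,x_0\rangle$), not the lower bound. For the lower bound on an arbitrary finite-cost pair you cannot drop $\langle E x(t_N),x(t_N)\rangle$; you must use $x(t_N)\to 0$, which is exactly Lemma~\ref{leminfzero} under $\mathrm{(H_1)}$. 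You do flag the decay issue in your last sentence, but in your write-up it is attached to the wrong inequality. Once the sign is fixed and the decay is invoked at the right place (lower bound for arbitrary pairs; nonnegativity of $\langle E x(t_N),x(t_N)\rangle$ for the feedback upper bound), your argument matches the paper's.
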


\begin{remark}
The mathematical content of Proposition~\ref{thmriccperminf} is well known in the literature (see \cite{bressan2007, Kwakernaak, lee1986, sontag1998, trelat2005}). The state feedback control $u^*$ in Proposition~\ref{thmriccperminf} is usually written as
$$ u^*(t) = - R^{-1} B^\top E^\infty x^*(t) \qquad \text{for a.e.}\  t \geq 0 $$
and~$\mathrm{(P\text{-}ARE)}$ is usually written as
\begin{equation*}
\left\lbrace
\begin{array}{l}
E^\infty B R^{-1} B^\top E^\infty - Q - A^\top E^\infty - E^\infty A = 0_{\R^{n\times n}}  \\[5pt]
E^\infty \in \S^n_{+}.
\end{array}
\right.
\end{equation*}
As said in Remark~\ref{remanalog}, our formulation of Proposition~\ref{thmriccperminf}, using the continuous map~$\FF$ defined in Section~\ref{secF}, provides a unified presentation in the permanent and sampled-data cases. In Theorem~\ref{thmmain1} hereafter, we are going to prove convergence of $E^{\infty,\Delta}$ to $E^\infty$ when $h=\Vert \Delta \Vert \to 0$. 
\end{remark}

\begin{remark}\label{remanalog2}
Similarly to the finite time horizon case (see Remark~\ref{remanalog}), the state feedback control in Proposition~\ref{thmriccsampleinf} and $\mathrm{(SD\text{-}ARE)}$ have various equivalent formulations in the literature (see \cite{bini2014,levis1968,levis1971,melzer1971,middleton1990}) and in most of these references Problem~$(\mathrm{OCP}^{\infty,\Delta}_{x_0})$ is recast as a fully discrete-time LQ optimal control problem with infinite time horizon. In particular the optimizability property for Problem~$(\mathrm{OCP}^{\infty,\Delta}_{x_0})$ is equivalent to the optimizability of the corresponding fully discrete-time problem (see \cite[Theorem~3]{dorato1971} or~\cite[p.348]{levis1971}). In the present work we will prove that, if~$\mathrm{(H_1)}$ and~$\mathrm{(H_2)}$ are satisfied, then there exists~$\overline{h} > 0$ such that the $h$-optimizability assumption~$\mathrm{(H}_2^h\mathrm{)}$ is satisfied for every $h\in(0,\overline{h}]$ (see Lemma~\ref{lemimportant} further). Moreover, in that context, a uniform bound of the minimal cost of Problem~$(\mathrm{OCP}^{\infty,\Delta}_{x_0})$ (independently of~$h\in(0,\overline{h}]$) is obtained. It plays a key role in order to prove convergence of~$E^{\infty,\Delta}$ to~$E^\infty$ when~$h=\Vert \Delta \Vert\to 0$. 

We provide in Appendix~\ref{appthmriccsampleinf} a proof of Proposition~\ref{thmriccsampleinf} based on the $h$-optimizability assumption~$\mathrm{(H}^h_2\mathrm{)}$, by keeping the initial continuous-time formulation of Problem~$(\mathrm{OCP}^{\infty,\Delta}_{x_0})$ as in~\cite{bourdin2017}. This proof is an adaptation to the sampled-data control case of the proof of Proposition~\ref{thmriccperminf} (see \cite[p.153]{bressan2007}, \cite[Theorem~7 p.198]{lee1986} or~\cite[Theorem~4.13]{trelat2005}). Moreover it contains in particular the proof of convergence of $E^{T,\Delta}$ to $E^{\infty,\Delta}$ when~$T \to + \infty$.
\end{remark}

\section{Main result}\label{secmain}

Propositions~\ref{thmriccperm}, \ref{thmriccsample}, \ref{thmriccperminf} and~\ref{thmriccsampleinf} in Section~\ref{secprelim} give state feedback optimal controls for permanent and sampled-data LQ problems in finite and infinite time horizons. In each case, the optimal control is expressed thanks to a Riccati matrix: $E^T$, $E^{T,\Delta}$, $E^\infty$ and $E^{\infty,\Delta}$ respectively. Our main result (Theorem~\ref{thmmain1} below) asserts that the following diagram commutes:
\begin{equation*}
\xymatrix@R=2cm@C=4cm {
\mathrm{(SD\text{-}DRE)} \hspace{-5cm} & E^{T,\Delta} \ar[r]^{T \to +\infty} \ar[d]_{\Vert \Delta \Vert \to 0} & E^{\infty,\Delta} \ar[d]^{\Vert \Delta \Vert \to 0} & \hspace{-5cm} \mathrm{(SD\text{-}ARE)} \\
\mathrm{(P\text{-}DRE)} \hspace{-5cm} & E^T  \ar[r]_{T \to +\infty} & E^\infty & \hspace{-5cm} \mathrm{(P\text{-}ARE)}
  }
\end{equation*}  
The precise mathematical meaning of the above convergences is provided in the next theorem which is the main contribution of the present work. Let us first state the following lemma (proved in Appendix~\ref{applemimportant}).

\begin{lemma}\label{lemimportant}
In the autonomous setting (see Definition~\ref{defautonomous}), under Assumptions~$\mathrm{(H_1)}$ and~$\mathrm{(H_2)}$, there exist~$\overline{h} > 0$ and $\overline{c} \geq 0$ such that, for all $h$-uniform time partitions $\Delta$ of the interval~$[0,+\infty)$, with~$0 < h \leq \overline{h}$, and for every~$x_0 \in \R^n$, there exists a pair $(x,u) \in \AC([0,+\infty),\R^n) \times \PC^\Delta ([0,+\infty),\R^m)$ such that~$\dot{x}(t) = Ax(t) + Bu(t)$ for almost every $t \geq 0$ and~$x(0)=x_0$, satisfying
$$ \int_0^{+\infty} \Big( \langle Q x(\t) , x(\t) \rangle_{\R^n} + \langle R u(\t) , u(\t) \rangle_{\R^m} \Big) \; d\t \leq \overline{c} \langle E^\infty x_0 , x_0 \rangle_{\R^n} < +\infty. $$
\end{lemma}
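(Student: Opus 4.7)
The strategy is to construct an explicit sampled-data control by discretizing the optimal \emph{permanent} feedback, and then to use $V(x):=\langle E^\infty x,x\rangle_{\R^n}$ itself as a Lyapunov function for the resulting one-step map. By Proposition~\ref{thmriccperminf}, under $\mathrm{(H_1)}$--$\mathrm{(H_2)}$ the matrix $E^\infty \in \S^n_{++}$ is well-defined and $\FF(E^\infty,0)=0$ rewrites as the algebraic Riccati identity $A^\top E^\infty + E^\infty A + Q = E^\infty B R^{-1} B^\top E^\infty$. Setting $K := -R^{-1}B^\top E^\infty$ and $A_{cl} := A+BK$, a direct substitution yields the closed-loop Lyapunov identity
\[
A_{cl}^\top E^\infty + E^\infty A_{cl} \;=\; -\bigl(Q + E^\infty B R^{-1} B^\top E^\infty\bigr) \;\leq\; -Q.
\]

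Given $x_0 \in \R^n$ and any $h$-uniform partition $\Delta = \{ih\}_{i\in\N}$, I will consider the candidate $u^\Delta \in \PC^\Delta([0,+\infty),\R^m)$ defined by the mimicry rule $u^\Delta_i := K x^\Delta(t_i)$, where $x^\Delta$ is the trajectory it drives. On each cell $[t_i,t_{i+1})$ the Duhamel formula gives $x^\Delta(\t) = \Theta(\t-t_i) x^\Delta(t_i)$ with $\Theta(s) := e^{sA} + \bigl(\int_0^s e^{rA}\, dr\bigr) BK$, so that $x^\Delta(t_{i+1}) = \Psi(h) x^\Delta(t_i)$ for $\Psi(h) := \Theta(h)$. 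Since $\Psi(0)=I_n$ and $\Psi'(0)=A_{cl}$, a standard matrix Taylor expansion gives $\Psi(h) = I_n + h A_{cl} + O(h^2)$ and therefore
\[
\Psi(h)^\top E^\infty \Psi(h) - E^\infty = h\bigl(A_{cl}^\top E^\infty + E^\infty A_{cl}\bigr) + O(h^2) = -h\bigl(Q + E^\infty B R^{-1} B^\top E^\infty\bigr) + O(h^2).
\]
Since $Q \in \S^n_{++}$ is strictly positive definite, I can fix $\overline{h}>0$ small enough that the $O(h^2)$ remainder is dominated in the positive semi-definite order by $\tfrac{h}{2} Q$ for every $h \in (0,\overline{h}]$, producing the one-step Lyapunov decrease
\[
V\bigl(x^\Delta(t_{i+1})\bigr) - V\bigl(x^\Delta(t_i)\bigr) \leq -\tfrac{h}{2} \langle Q x^\Delta(t_i), x^\Delta(t_i) \rangle_{\R^n}.
\]
Telescoping and using $V \geq 0$ then yields the uniform-in-$h$ discrete estimate $\sum_{i\geq 0} h\, \langle Q x^\Delta(t_i), x^\Delta(t_i)\rangle_{\R^n} \leq 2 \langle E^\infty x_0, x_0 \rangle_{\R^n}$.

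It remains to convert this discrete bound into a bound on the continuous-time cost. Continuity of $\Theta$ on the compact interval $[0,\overline{h}]$ with $\Theta(0)=I_n$ gives a constant $C_\Theta$, independent of $h \in (0,\overline{h}]$ and of $x_0$, such that $|\Theta(s) y|^2 \leq C_\Theta |y|^2$ for every $s \in [0,\overline{h}]$ and every $y \in \R^n$; combined with the piecewise-constant identity $u^\Delta \equiv K x^\Delta(t_i)$ on $[t_i,t_{i+1})$, this gives a cell-wise bound of the form
\[
\int_{t_i}^{t_{i+1}} \Bigl( \langle Q x^\Delta(\t), x^\Delta(\t) \rangle_{\R^n} + \langle R u^\Delta(\t), u^\Delta(\t) \rangle_{\R^m} \Bigr) d\t \;\leq\; C'\, h\, |x^\Delta(t_i)|^2
\]
with $C'$ depending only on $Q$, $R$, $K$ and $C_\Theta$. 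Since $\mathrm{(H_1)}$ ensures $|x^\Delta(t_i)|^2 \leq \lambda_{\min}(Q)^{-1} \langle Q x^\Delta(t_i), x^\Delta(t_i)\rangle_{\R^n}$, summing in $i$ and invoking the telescoped estimate above concludes the proof with $\overline{c} := 2 C' / \lambda_{\min}(Q)$. The only delicate point I anticipate is ensuring that the error term in the expansion $\Psi(h)^\top E^\infty \Psi(h) - E^\infty = h(A_{cl}^\top E^\infty + E^\infty A_{cl}) + O(h^2)$ is \emph{uniformly} controlled on $(0,\overline{h}]$; this follows from the analyticity of $h\mapsto \Psi(h)$, so no serious obstacle is expected.
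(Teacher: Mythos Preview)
Your proposal is correct and uses the same strategic ingredients as the paper: the sampled permanent optimal feedback $u_i = Kx(t_i)$ with $K=-R^{-1}B^\top E^\infty$ as candidate control, and $V(x)=\langle E^\infty x,x\rangle$ as Lyapunov function, exploiting the closed-loop identity $A_{cl}^\top E^\infty + E^\infty A_{cl} \leq -Q$ coming from~$\mathrm{(P\text{-}ARE)}$.

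The technical execution differs. The paper works in \emph{continuous time}: it bounds the sampling error $\|x(t)-x(t_i)\|$ on each cell by a Gr\"onwall argument, feeds this into $\frac{d}{dt}V(x(t))$ to obtain the differential inequality $\frac{d}{dt}V(x(t)) \leq -\tfrac{\rho_{\min}(Q)}{2}\|x(t)\|^2$ for all $t\geq 0$, and from the resulting exponential decay of $\|x(t)\|$ bounds the $Q$-integral and the $R$-sum separately. You instead work in \emph{discrete time}: you Taylor-expand the one-step map $\Psi(h)=e^{hA}+\bigl(\int_0^h e^{rA}\,dr\bigr)BK$ to get a one-step decrease $V(x(t_{i+1}))-V(x(t_i))\leq -\tfrac{h}{2}\langle Qx(t_i),x(t_i)\rangle$, telescope, and then pass from the discrete $\ell^1$ bound on $(\langle Qx(t_i),x(t_i)\rangle)_i$ to the continuous cost via a uniform cell-wise estimate. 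Your route is arguably more streamlined (no continuous Gr\"onwall, no explicit exponential rate needed), while the paper's route yields as a by-product an explicit exponential decay of the sampled trajectory and explicit formulas for $\overline{h}$ and $\overline{c}$ in terms of $\rho_{\min}(Q)$, $\rho_{\max}(E^\infty)$, etc. Both arguments are complete; the uniform $O(h^2)$ control you flag as delicate is indeed immediate from the analyticity of $h\mapsto\Psi(h)$.
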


Not only Lemma~\ref{lemimportant} asserts that, if $\mathrm{(H_1)}$ and~$\mathrm{(H_2)}$ are satisfied, then there exists~$\overline{h} > 0$ such that~$\mathrm{(H}_2^h\mathrm{)}$ is satisfied for every $h\in(0,\overline{h}]$, but it also provides a \textit{uniform} $h$-optimizability for all~$0 < h \leq \overline{h}$ (in the sense that the finite right-hand term is independent of $h$). This uniform bound plays a crucial role in order to derive convergence of $E^{\infty,\Delta}$ to $E^\infty$ when $h = \Vert \Delta \Vert \to 0$ (which corresponds to the right arrow of the above diagram and to the fourth item of Theorem~\ref{thmmain1} below). Finally, from the proof of Lemma~\ref{lemimportant} in Appendix~\ref{applemimportant}, note that a lower bound of the threshold~$\overline{h} > 0$ can be expressed in function of the norms of~$A$, $B$, $Q$, $R$ and~$E^\infty$.

\begin{theorem}[Commutative diagram]\label{thmmain1}
We have the following convergence results:
\begin{enumerate}
\item[\rm{(i)}] \textbf{Left arrow of the diagram:} Given any $T > 0$, we have
$$ \lim\limits_{  \Vert \Delta \Vert \to 0} \ \ \underset{i=0,\ldots,N}{\max} \Vert E^T(t_i) - E^{T,\Delta}_i \Vert_{\R^{n\times n}}  = 0 $$
for all time partitions $\Delta = \{ t_i \}_{i=0,\ldots,N}$ of the interval~$[0,T]$.
\item[\rm{(ii)}] \textbf{Bottom arrow of the diagram:} Assume that $P=0_{\R^{n\times n}}$ and that we are in the autonomous setting (see Definition~\ref{defautonomous}). Under Assumptions~$\mathrm{(H_1)}$ and $\mathrm{(H_2)}$, we have
$$ \lim\limits_{  T \to +\infty } E^T (t) = E^\infty  \qquad \forall t \geq 0. $$
\item[\rm{(iii)}] \textbf{Top arrow of the diagram:} Assume that $P=0_{\R^{n\times n}}$ and that we are in the autonomous setting (see Definition~\ref{defautonomous}). Let $\Delta = \{ t_i \}_{i \in \N} $ be a $h$-uniform time partition of the interval~$[0,+\infty)$. For all~$N \in \N^*$, we denote by~$\Delta_N := \Delta \cap [0,t_N]$ the~$h$-uniform time partition of the interval~$[0,t_N]$. Under Assumptions~$\mathrm{(H_1)}$ and $\mathrm{(H}^h_2\mathrm{)}$, we have
$$ \lim\limits_{  N \to +\infty } E^{t_N , \Delta_N }_i = E^{\infty , \Delta} \qquad \forall i \in \N. $$
\item[\rm{(iv)}] \textbf{Right arrow of the diagram:} In the autonomous setting (see Definition~\ref{defautonomous}), under Assumptions~$\mathrm{(H_1)}$ and $\mathrm{(H_2)}$, we have
$$ \lim\limits_{  h \to 0} E^{\infty,\Delta} = E^\infty $$
for all $h$-uniform time partitions $\Delta = \{ t_i \}_{i \in \N}$ of the interval $[0,+\infty)$ with $0 < h \leq \overline{h}$ (where $\overline{h} > 0$ is given by Lemma~\ref{lemimportant}).
\end{enumerate}
\end{theorem}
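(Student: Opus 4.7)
The plan is to handle the four arrows of the diagram separately, exploiting throughout the continuity of $\FF$ at $h=0$ (Lemma~\ref{lemF}) and the minimal-cost representation $\langle E\cdot x_0,x_0\rangle_{\R^n}$ carried by each Riccati matrix.

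\textbf{Bottom and top arrows (ii)--(iii).} For (ii), in the autonomous setting with $P=0$, I would first observe that $T\mapsto\langle E^T(0)x_0,x_0\rangle_{\R^n}$ is nondecreasing (the integrand is nonnegative) and bounded above by $\langle E^\infty x_0,x_0\rangle_{\R^n}$ (restrict the unique optimal permanent control of $(\mathrm{OCP}^\infty_{x_0})$ to $[0,T]$). Hence $E^T(0)$ admits a limit $L\leq E^\infty$ in $\S^n_+$, and polarization upgrades convergence of the quadratic form to convergence of matrices. The reverse inequality $E^\infty\leq L$ follows by extracting a weak $\L^2_{\loc}$ limit of a sequence of optimal controls $u^{*,T}$ and invoking weak lower semicontinuity of the cost, as in standard infinite-horizon LQR theory. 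Pointwise convergence at $t>0$ then follows from the shift $E^T(t)=E^{T-t}(0)$ valid in the autonomous setting. Part (iii) is the discrete analogue: monotonicity of the cost of $(\mathrm{OCP}^{t_N,\Delta_N}_{x_0})$ in $N$, boundedness by $\langle E^{\infty,\Delta}x_0,x_0\rangle_{\R^n}$, and the shift $E^{t_N,\Delta_N}_i=E^{t_{N-i},\Delta_{N-i}}_0$ afforded by periodicity of the $h$-uniform grid.

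\textbf{Left arrow (i).} The recursion $(\mathrm{SD\text{-}DRE})$ is a one-step implicit scheme for $(\mathrm{P\text{-}DRE})$ whose right-hand side $\FF(t,E,h)$ converges to $\FF(t,E,0)$ uniformly on compact sets as $h\to 0$. The plan is to first establish an a priori bound $\|E^{T,\Delta}_i\|\leq C$ independent of $\Delta$, using the cost representation together with the upper estimate furnished by a piecewise-constant averaging on the grid of the permanent optimal control of $(\mathrm{OCP}^T_{x_0})$. Interpolating the iterates to a piecewise-affine function $\widetilde E^{T,\Delta}$, the recursion rewrites as a perturbed backward ODE
\begin{equation*}
\dot{\widetilde E}^{T,\Delta}(t)=\FF\bigl(t,\widetilde E^{T,\Delta}(t),0\bigr)+\rho_\Delta(t),
\end{equation*}
with $\rho_\Delta\to 0$ uniformly by continuity of $\FF$ and the a priori bound. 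A Gronwall estimate against $E^T$, whose right-hand side is locally Lipschitz in $E$ on the bounded region under consideration, closes the argument and yields the stated uniform convergence along the grid.

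\textbf{Right arrow (iv).} This is the crux, and the step where Lemma~\ref{lemimportant} is indispensable. The lemma provides the $h$-uniform bound
\begin{equation*}
\langle E^{\infty,\Delta}x_0,x_0\rangle_{\R^n}\leq \overline{c}\,\langle E^\infty x_0,x_0\rangle_{\R^n}\qquad\forall\,0<h\leq\overline{h},
\end{equation*}
so $\{E^{\infty,\Delta}\}_{h\in(0,\overline{h}]}$ is precompact in $\S^n_+$. Let $E^\star$ be a cluster point along some sequence $h_k\to 0$. Because $\PC^\Delta([0,+\infty),\R^m)\subset\L^2([0,+\infty),\R^m)$ at each $h_k$, one has $\langle E^\infty x_0,x_0\rangle_{\R^n}\leq\langle E^{\infty,\Delta}x_0,x_0\rangle_{\R^n}$, hence $E^\infty\leq E^\star$. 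For the reverse inequality, I would take the unique optimal permanent control of $(\mathrm{OCP}^\infty_{x_0})$ and replace it on each subinterval of the $h_k$-grid by its average: these averages are admissible sampled-data competitors, and continuity of the flow together with the $\L^2$-integrability of the optimal pair propagates to convergence of their sampled-data costs to $\langle E^\infty x_0,x_0\rangle_{\R^n}$, yielding $E^\star\leq E^\infty$. Combining the two inequalities forces $E^\star=E^\infty$, and since every cluster point coincides with $E^\infty$, the whole family converges. The main obstacle lies precisely in this step: controlling tail integrals uniformly in $h$ on the infinite horizon, which leans essentially on the uniformity of Lemma~\ref{lemimportant}.
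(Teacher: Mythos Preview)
Your treatment of (ii) and (iii) matches the paper's: monotonicity of the minimal cost in the horizon, the shift identity in the autonomous/periodic setting, and polarization to pass from quadratic forms to matrices. For (i) your perturbed-ODE/Gronwall plan is the continuous repackaging of the paper's discrete Lax-type argument; the paper works directly with a backward discrete Gr\"onwall inequality and gets the a priori bound more cheaply by plugging in the \emph{null} control (Lemma~\ref{lembound}) rather than an averaged one, but the substance is the same.

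The genuine divergence is in (iv). The paper's argument is purely algebraic and avoids entirely the obstacle you flag. From Lemma~\ref{lemimportant} one gets $\|E^{\infty,\Delta_k}\|\leq \overline{c}\,\|E^\infty\|$, extracts a subsequential limit $L\in\S^n_+$, and then simply passes to the limit in the \emph{equation} $\FF(E^{\infty,\Delta_k},h_k)=0$ using continuity of $\FF$ (Lemma~\ref{lemF}) to obtain $\FF(L,0)=0$; uniqueness of $(\mathrm{P\text{-}ARE})$ forces $L=E^\infty$. No cost-sandwich, no tail control, no averaging. This is precisely the payoff of the unified map $\FF$: the sampled-data and permanent Riccati equations live in the same continuous family, so convergence of solutions follows from convergence of parameters plus uniqueness.

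Your route via averaging the permanent optimal control is not wrong in spirit, but the step you yourself single out is a real gap as written. You need $\int_0^\infty\langle Qx^{h_k},x^{h_k}\rangle\,d\tau\to\int_0^\infty\langle Qx^*,x^*\rangle\,d\tau$, and on an infinite horizon this requires a uniform-in-$h$ tail estimate for the \emph{averaged} trajectories $x^{h_k}$, which does not follow from Lemma~\ref{lemimportant}: that lemma bounds the cost of a \emph{different} sampled-data control (the sampled state feedback $u_i=-R^{-1}B^\top E^\infty x(t_i)$), not of your averaged one. You would have to either redo a Lyapunov-type decay estimate for the averaged closed loop, or argue via truncation plus a separate tail bound---doable, but considerably heavier than the paper's two-line equation-passing argument.
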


\begin{remark}
The proof of Theorem~\ref{thmmain1} is done in Appendix~\ref{appthmmain1}. Some results similar to the four items of Theorem~\ref{thmmain1} have already been discussed and can be found in the literature. For example, in the autonomous case and with $h$-uniform time partitions, the first item of Theorem~\ref{thmmain1} has been proved in~\cite[Corollary~2.3]{astrom1963} (a second-order convergence has even been derived). The second item of Theorem~\ref{thmmain1} is a well known fact and follows from the proof of Proposition~\ref{thmriccperminf} (see \cite[p.153]{bressan2007}, \cite[Theorem~7]{lee1986} or \cite[Theorem~4.13]{trelat2005}). The third item of Theorem~\ref{thmmain1} follows from the proof of Proposition~\ref{thmriccsampleinf} given in Appendix~\ref{appthmriccsampleinf} by keeping the initial continuous-time writting of Problem~$(\mathrm{OCP}^{\infty,\Delta}_{x_0})$. As evoked in Remarks~\ref{remanalog} and~\ref{remanalog2}, in the literature, the LQ optimal sampled-data control problems are usually rewritten as fully discrete-time LQ optimal control problems. As a consequence the result of the third item of Theorem~\ref{thmmain1} is usually reduced in the literature to the corresponding result at the discrete level (see \cite[Theorem~3]{dorato1971} or~\cite[p.348]{levis1971}). The last item of Theorem~\ref{thmmain1} is proved in Appendix~\ref{appthmmain1} by using the uniform $h$-optimizability obtained in Lemma~\ref{lemimportant}. Note that sensitivity analysis of $\mathrm{(SD\text{-}ARE)}$ with respect to $h$ has been explored in~\cite{fukata1979,levis1968,levis1971,melzer1971} by computing its derivative algebraically in view of optimization of the sampling period~$h$. Note that the map~$\FF$ defined in Section~\ref{secF} is a suitable candidate in order to invoke the classical implicit function theorem and justify the differentiability of~$E^{\infty,\Delta}$ with respect to~$h$. Finally the contribution of the present work is to provide a framework allowing to gather Propositions~\ref{thmriccperm}, \ref{thmriccsample}, \ref{thmriccperminf} and~\ref{thmriccsampleinf} in a unified setting, based on the continuous map~$\FF$, which moreover allows us to prove several convergence results for Riccati matrices and to summarize it in a single diagram.  
\end{remark}

\appendix

\section{Proofs}\label{app1}

Preliminaries and reminders are done in Section~\ref{appprelim}. We prove Proposition~\ref{thmriccsampleinf} in Section~\ref{appthmriccsampleinf}, Lemma~\ref{lemimportant} in Section~\ref{applemimportant} and Theorem~\ref{thmmain1} in Section~\ref{appthmmain1}.

\subsection{Preliminaries}\label{appprelim}

\begin{lemma}[A backward discrete Gr\"onwall lemma]\label{lemgronwall}
Let $N \in \N^*$ and $(w_i)_{i=0,\ldots,N}$, $(z_i)_{i=1,\ldots,N}$ and $(\mu_i)_{i=1,\ldots,N}$ be three finite nonnegative real sequences which satisfy $w_N = 0$ and 
$$ w_i \leq (1+\mu_{i+1})w_{i+1} + z_{i+1} \qquad \forall i = 0,\ldots,N-1. $$ 
Then
$$ w_i \leq \sum_{j=i+1}^N \left( \prod_{q=i+1}^{j-1} (1+\mu_q) \right) z_j \leq \sum_{j=i+1}^N e^{ \sum_{q=i+1}^{j-1} \mu_q } z_j  \qquad \forall i=0,\ldots,N-1. $$
\end{lemma}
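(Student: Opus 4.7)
The plan is to prove the first inequality by finite backward induction on $i$, running from $i = N-1$ down to $i = 0$, and then to obtain the second inequality from the elementary pointwise bound $1+x \leq e^x$ valid for all $x \geq 0$. Nonnegativity of the three sequences is used to keep inequalities in the right direction; the only subtle points are the careful handling of empty products (equal to $1$ by convention) and the re-indexing of the sum when the new term $z_{i+1}$ is incorporated.

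First I would dispose of the base case $i = N-1$. Substituting $w_N = 0$ into the hypothesis yields $w_{N-1} \leq z_N$. On the claimed right-hand side at $i = N-1$ there is a single summand $j = N$, whose product $\prod_{q=N}^{N-1}(1+\mu_q)$ is empty and therefore equal to $1$, so the right-hand side equals exactly $z_N$.

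For the inductive step, assuming the first inequality holds at index $i+1$, I would plug the inductive hypothesis into the recurrence $w_i \leq (1+\mu_{i+1}) w_{i+1} + z_{i+1}$. Since $1+\mu_{i+1} \geq 0$, this preserves the inequality and gives
$$
w_i \;\leq\; (1+\mu_{i+1})\sum_{j=i+2}^{N} \left(\prod_{q=i+2}^{j-1}(1+\mu_q)\right) z_j \;+\; z_{i+1}.
$$
The factor $1+\mu_{i+1}$ can be absorbed inside each product by extending its range down to $q = i+1$, producing $\prod_{q=i+1}^{j-1}(1+\mu_q)$ for every $j \geq i+2$. The leftover term $z_{i+1}$ is precisely the missing $j = i+1$ summand (whose product $\prod_{q=i+1}^{i}(1+\mu_q)$ is empty, hence equal to $1$), and the right-hand side collapses to $\sum_{j=i+1}^{N}\left(\prod_{q=i+1}^{j-1}(1+\mu_q)\right) z_j$, which is exactly the desired bound at index $i$.

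Finally, for the second inequality, I would apply the bound $1+\mu_q \leq e^{\mu_q}$ factor by factor inside each product $\prod_{q=i+1}^{j-1}(1+\mu_q)$, using the nonnegativity of the $\mu_q$, so that each product is majorized by $e^{\sum_{q=i+1}^{j-1}\mu_q}$; multiplying by the nonnegative coefficients $z_j$ and summing preserves the inequality. I do not anticipate any genuine obstacle; the computation is essentially automatic once the empty-product convention and the shift of indices are handled cleanly.
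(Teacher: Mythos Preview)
Your proof is correct and follows exactly the approach indicated in the paper: backward induction for the first inequality and the elementary bound $1+\mu\leq e^{\mu}$ for the second. You have simply spelled out the details that the paper leaves to the reader.
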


\begin{proof}
The first inequality follows from a backward induction. The second inequality comes from the inequality $1+\mu \leq e^\mu$ for all $\mu \geq 0$.
\end{proof}

\begin{lemma}[Some reminders on symmetric matrices]\label{lemmatrice}
Let $p \in \N^*$. The following properties are satisfied:
\begin{enumerate}
\item[\rm{(i)}] Let $E \in \S^p_+$ (resp., $E \in \S^p_{++}$). Then all eigenvalues of $E$ are nonnegative (resp., positive) real numbers.
\item[\rm{(ii)}] Let $E \in \S^p_{+}$. Then $ \rho_\min(E) \Vert y \Vert_{\R^p}^2 \leq \langle Ey,y \rangle_{\R^p} \leq \rho_\max(E) \Vert y \Vert_{\R^p}^2 $ for all $y \in \R^p$, where $\rho_{\min}(E)$ and~$\rho_{\max}(E)$ stand respectively for the smallest and the largest nonnegative eigenvalues of $E$.
\item[\rm{(iii)}] Let $E \in \S^p_{++}$. Then $E$ is invertible and $E^{-1} \in \S^p_{++}$. Moreover we have~$\rho_{\min}(E^{-1}) = 1/\rho_\max(E)$ and~$\rho_{\max}(E^{-1}) = 1/\rho_\min(E)$.
\item[\rm{(iv)}] Let $E \in \S^p_+$. It holds that $\Vert E \Vert_{\R^{p \times p}} = \rho_{\max}(E) $.
\item[\rm{(v)}] Let $E \in \S^p_+$. If there exists $c \geq 0$ such that $\langle E y , y \rangle_{\R^p} \leq c \Vert y \Vert^2_{\R^p}$ for every $y \in \R^p$, then $\Vert E \Vert_{\R^{p \times p}} \leq c$.
\item[\rm{(vi)}] Let $E_1$, $E_2 \in \S^p_+$. If $\langle E_1 y , y \rangle_{\R^p} = \langle E_2 y , y \rangle_{\R^p}$ for every $y \in \R^p$ then $E_1 = E_2$.
\item[\rm{(vii)}] Let $(E_k)_{k \in \N}$ be a sequence of matrices in $ \S^p_+$. If $\langle E_k y , y \rangle_{\R^p}$ converges when $k \to +\infty$ for all~$y \in \R^p$ then~$(E_k)_{k \in \N}$ has a limit $E \in  \S^p_+$.
\end{enumerate}
\end{lemma}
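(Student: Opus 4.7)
The plan is to derive all seven items from the spectral theorem for real symmetric matrices: any $E\in\R^{p\times p}$ symmetric can be written $E=UDU^\top$ with $U$ orthogonal and $D=\mathrm{diag}(\lambda_1,\ldots,\lambda_p)$ real. With this diagonalisation in hand, most of the items reduce to scalar estimates in the eigenbasis, plus a polarisation identity at the end.

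First I would handle (i) directly: if $Ey=\lambda y$ with $y\ne 0$, then $\lambda\Vert y\Vert^2_{\R^p}=\langle Ey,y\rangle_{\R^p}$ is $\geq 0$ when $E\in\S^p_+$ and $>0$ when $E\in\S^p_{++}$, so eigenvalues are nonnegative, respectively positive. For (ii) I would diagonalise $E=UDU^\top$, set $z=U^\top y$ (so $\Vert z\Vert_{\R^p}=\Vert y\Vert_{\R^p}$), and use $\langle Ey,y\rangle_{\R^p}=\sum_i \lambda_i z_i^2$, bracketed termwise between $\rho_{\min}(E)\sum_i z_i^2$ and $\rho_{\max}(E)\sum_i z_i^2$. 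Item (iii) follows since $E=UDU^\top$ with all $\lambda_i>0$ yields $E^{-1}=UD^{-1}U^\top\in\S^p_{++}$, whose eigenvalues are $1/\lambda_i$, from which the formulas for $\rho_{\min}(E^{-1})$ and $\rho_{\max}(E^{-1})$ are immediate. For (iv), the spectral theorem gives $\Vert E\Vert_{\R^{p\times p}}=\max_i|\lambda_i|$; when $E\in\S^p_+$ all $\lambda_i\geq 0$ and this maximum is precisely $\rho_{\max}(E)$.

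For (v), let $y$ be a unit eigenvector of $E$ for the eigenvalue $\rho_{\max}(E)$. Then $\rho_{\max}(E)=\langle Ey,y\rangle_{\R^p}\leq c$, and by (iv) we conclude $\Vert E\Vert_{\R^{p\times p}}\leq c$. Item (vi) follows by a polarisation argument: setting $D:=E_1-E_2$, symmetric with $\langle Dy,y\rangle_{\R^p}=0$ for every $y$, expanding $\langle D(y+z),y+z\rangle_{\R^p}=0$ and using symmetry gives $\langle Dy,z\rangle_{\R^p}=0$ for all $y,z$, hence $D=0$. For (vii), the same polarisation trick applied to $E_k$ shows that $\langle E_k y,z\rangle_{\R^p}$ converges for every $y,z\in\R^p$; taking $y,z$ in the canonical basis yields entrywise convergence $E_k\to E$; passing to the limit in $E_k^\top=E_k$ and in $\langle E_k y,y\rangle_{\R^p}\geq 0$ shows $E\in\S^p_+$.

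I do not expect any genuine obstacle here — every item is a textbook consequence of the spectral theorem — but the point deserving a little care is (vii), where one must first pass from convergence of the quadratic form to convergence of the full bilinear form (via polarisation) in order to extract entrywise convergence, and then separately verify that symmetry and positive semi-definiteness are preserved under this (necessarily entrywise, since $\S^p_+$ is closed in $\R^{p\times p}$) limit.
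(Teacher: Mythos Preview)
Your proof is correct and follows essentially the same route as the paper. The paper dismisses items (i)--(iv) as classical (citing a textbook), derives (v) from (iv), and handles (vi) and (vii) via the polarisation identity $e_{ij}=\tfrac{1}{2}\big(\langle E(b_i+b_j),b_i+b_j\rangle-\langle Eb_i,b_i\rangle-\langle Eb_j,b_j\rangle\big)$ on the canonical basis --- exactly your polarisation argument, only stated directly at the level of matrix entries rather than first passing through the full bilinear form.
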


\begin{proof}
The first four items are classical results (see, e.g.,~\cite{horn2013}). The fifth item follows from the fourth one. The last two items follow from the following fact: if $E \in \S^p_+$, with $E = (e_{ij})_{i,j=1,\ldots,p}$, then
$$ e_{ij} = \langle E b_j , b_i \rangle_{\R^p} = \dfrac{1}{2} \Big( \langle E (b_i+b_j) , b_i+b_j \rangle_{\R^p} - \langle E b_i , b_i \rangle_{\R^p} - \langle E b_j , b_j \rangle_{\R^p} \Big)  \qquad \forall i, j = 1, \ldots,p $$
where $\{ b_i \}_{i=1,\ldots,p}$ stands for the canonical basis of $\R^p$.
\end{proof}

\begin{lemma}[Properties of the function~$\FF$]\label{lemF}
The three following properties are satisfied:
\begin{enumerate}[label=\rm{(\roman*)}]
\item The map $\FF$ is well-defined on $\R \times \S^n_+ \times \R_+$.
\item The map $\FF$ is continuous on $\R \times \S^n_+ \times \R_+$.
\item If $\KK$ is a compact subset of $\R \times \S^n_+ \times \R_+$, then there exists a constant $c \geq 0$ such that
$$ \Vert \FF(t,E_2,h) - \FF(t,E_1,h) \Vert_{\R^{n\times n}} \leq c \Vert E_2-E_1 \Vert_{\R^{n\times n}} $$
for all $(t,E_1,E_2,h)$ such that $(t,E_1,h)  \in \KK$ and $(t,E_2,h) \in \KK$.
\end{enumerate}
\end{lemma}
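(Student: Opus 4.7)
The plan is to verify the three items in turn.

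For well-definedness (i), the only point to check is that $\NN(t,E,h)$ is invertible for every $(t,E,h) \in \R \times \S^n_+ \times \R_+$. At $h=0$ this is immediate from $\NN(t,E,0) = R(t) \in \S^m_{++}$. For $h>0$ I would show that $\NN_1(t,E,h) \in \S^m_{++}$, while $\NN_2(t,E,h), \NN_3(t,E,h) \in \S^m_+$, so that $\NN(t,E,h) \in \S^m_{++}$ is invertible by Lemma~\ref{lemmatrice}. The positive definiteness of $\NN_1$ follows from $R(\tau) \in \S^m_{++}$ for every $\tau$ together with continuity (for $v \neq 0$, $v^\top R(\tau) v$ is continuous and strictly positive on $[t-h,t]$, hence so is its average). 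For $\NN_2$ I would fix $v \in \R^m$, set $w_\tau := \bigl( \int_{t-h}^\tau \Phi(\tau,\xi) B(\xi)\,d\xi \bigr) v \in \R^n$, and observe that $v^\top \NN_2(t,E,h) v = \frac{1}{h}\int_{t-h}^t w_\tau^\top Q(\tau) w_\tau\,d\tau \geq 0$ since $Q(\tau) \in \S^n_+$; the nonnegativity of $v^\top \NN_3(t,E,h) v$ is obtained similarly, using $E \in \S^n_+$.

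For continuity (ii), at any $(t_0,E_0,h_0)$ with $h_0>0$, each of the seven building blocks $\MM_1$, $\MM_2$, $\NN_1$, $\NN_2$, $\NN_3$, $\GG_1$, $\GG_2$ is continuous in $(t,E,h)$ as an integral with continuous integrand, endpoints depending continuously on $(t,h)$, and in which $E$ enters linearly. The delicate case is continuity at points of the form $(t_0,E_0,0)$. I would treat each block separately and verify that its $h \to 0^+$ limit coincides with its value at $h=0$, locally uniformly in $(t,E)$. Two ingredients suffice: (a) for any continuous $g$, $\frac{1}{h}\int_{t-h}^t g(\tau)\,d\tau \to g(t)$ locally uniformly in $t$, combined with $\Phi(t,t-h) \to I$, which handles $\MM_1$, $\NN_1$ and $\GG_1$; and (b) the Taylor expansion $\Phi(t,t-h) = I + h A(t) + o(h)$, obtained from $\partial_\tau \Phi(t,\tau) = -\Phi(t,\tau) A(\tau)$, which identifies the limit of $\GG_2(t,E,h)$ as $A(t)^\top E + E A(t)$. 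The remaining blocks $\MM_2$, $\NN_2$, $\NN_3$ involve an additional integration over an interval of length at most $h$, hence are of order $O(h)$, which matches their zero value at $h=0$. Continuity of $\FF$ itself then follows from the continuity of matrix inversion on the open set of invertible matrices, using (i).

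For (iii), I note that $\MM_1$, $\NN_3$ and $\GG_2$ are linear in $E$, while the four remaining blocks are independent of $E$. Hence $\MM$, $\NN$ and $\GG$ are all affine in $E$, and on $\KK$ the differences $\MM(t,E_2,h)-\MM(t,E_1,h)$, $\NN(t,E_2,h)-\NN(t,E_1,h)$, $\GG(t,E_2,h)-\GG(t,E_1,h)$ are bounded by $c\,\Vert E_2-E_1\Vert_{\R^{n\times n}}$, for some constant $c$ depending only on $\KK$. By continuity, $\MM$, $\NN$, $\GG$ and $\NN^{-1}$ are bounded on $\KK$. The algebraic identities $AX - A'X' = (A-A')X + A'(X-X')$ and $Y^{-1} - (Y')^{-1} = Y^{-1}(Y'-Y)(Y')^{-1}$, applied in turn to the product $\MM\,\NN^{-1}\,\MM^\top$, then yield the announced Lipschitz estimate through a chain of triangle inequalities.

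The main obstacle will be the continuity at $h=0$: the real work lies in establishing the correct leading-order expansions of the seven integral blocks locally uniformly in $(t,E)$; once these asymptotic identifications are in place, the remainder of the argument is routine bounded-and-Lipschitz bookkeeping.
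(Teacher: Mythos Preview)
Your proposal is correct. The approach for (i) matches the paper's exactly. For (ii) and (iii) you take a somewhat different, more hands-on route than the paper, and it is worth noting the contrast.

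For (ii), the paper argues abstractly: continuity at $h_0>0$ by dominated convergence, and continuity at $h_0=0$ by observing that $t$ is a Lebesgue point of every integrand involved. You instead compute the leading-order expansion of each of the seven blocks explicitly (averaging for $\MM_1,\NN_1,\GG_1$; Taylor expansion of $\Phi(t,t-h)$ for $\GG_2$; direct $O(h)$ bounds for $\MM_2,\NN_2,\NN_3$). Your argument is more concrete and gives slightly more information (the actual rates), at the cost of treating seven cases; the paper's argument is shorter but less transparent.

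For (iii), the paper observes that $\FF$ is $C^1$ in $E$, proves that $\mathcal{D}_2\FF$ is continuous on $\R\times\S^n_+\times\R_+$ (by the same mechanism as in (ii)), and then invokes the mean-value inequality on the compact $\KK$. You instead exploit directly that $\MM,\NN,\GG$ are affine in $E$ with $(t,h)$-continuous coefficients, bound those coefficients and $\NN^{-1}$ on $\KK$, and chain the identities $AX-A'X'=(A-A')X+A'(X-X')$ and $Y^{-1}-(Y')^{-1}=Y^{-1}(Y'-Y)(Y')^{-1}$. This avoids differentiating $\FF$ altogether and is arguably more elementary; the paper's approach is cleaner to state but requires redoing the continuity argument of (ii) one derivative higher.
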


\begin{proof}
{\rm (i)} For $(t,E,h) \in \R \times \S^n_+ \times \R_+$, note that $\NN_1(t,E,h) \in \S^m_{++}$, $\NN_2(t,E,h) \in \S^m_+$ and $\NN_3(t,E,h) \in \S^m_+$. Hence the sum $\NN(t,E,h) $ belongs to~$\S^m_{++}$ and thus is invertible from~(iii) of Lemma~\ref{lemmatrice}. 

{\rm (ii)} Since taking the inverse of a matrix is a continuous operation, we only need to prove that $\MM$, $\NN$ and~$\GG$ are continuous over~$\R \times \S^n_+ \times \R_+$. Let~$(t_k,E_k,h_k)_{k\in \N}$ be a sequence of $\R \times \S^n_+ \times \R_+$ which converges to some~$(t,E,h) \in \R \times \S^n_+ \times \R_+$. We need to prove that~$\MM(t_k,E_k,h_k)$, $\NN(t_k,E_k,h_k)$ and $\GG(t_k,E_k,h_k)$ converge respectively to~$\MM (t,E,h)$, $\NN(t,E,h)$ and $\GG (t,E,h)$ when~$k \to +\infty$. The case $h \neq 0$ can be treated using, for instance, the Lebesgue dominated convergence theorem. Let us discuss the case $h=0$ and let us assume, without loss of generality (since~$A$, $B$, $Q$ and $R$ are continuous matrices), that $h_k > 0$ for every $k \in \N$. In that situation we conclude by using in particular the fact that $t$ is a Lebesgue point of all integrands involved in the definitions of the functions~$\MM$, $\NN$ and $\GG$. 

{\rm (iii)} It is clear that $\FF$ is continuously differentiable over $\S^n_+$ with respect to its second variable. Similarly to the previous item, we can moreover prove that the map $(t,E,h) \mapsto \mathcal{D}_2 \FF(t,E,h)$ is continuous over~$\R \times \S^n_+ \times \R_+$. Thus the third item follows by applying the Taylor expansion formula with integral remainder.
\end{proof}

\begin{lemma}[A uniform bound for $E^T$ and $E^{T,\Delta}$]\label{lembound}
Let $T > 0$. We have
$$ \Vert E^T(t) \Vert_{\R^{n\times n}} \leq \Big( \Vert P \Vert_{\R^{n\times n}} + (T-t) \Vert Q_{|[t,T]} \Vert_{\infty} \Big) e^{2 \Vert A_{|[t,T]} \Vert_{\infty} (T-t)} \qquad \forall t \in [0,T]. $$
If $\Delta = \{ t_i \}_{i=0,\ldots,N}$ is a time partition of the interval $[0,T]$, then
$$ \Vert E^{T,\Delta}_i \Vert_{\R^{n\times n}} \leq \Big( \Vert P \Vert_{\R^{n\times n}} + (T-t_i) \Vert Q_{|[t_i,T]} \Vert_{\infty} \Big) e^{2 \Vert A_{|[t_i,T]} \Vert_{\infty} (T-t_i)} \qquad \forall i=0,\ldots,N. $$
\end{lemma}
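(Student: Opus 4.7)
The plan is to exploit the fact that $\langle E^T(t) x_0,x_0\rangle_{\R^n}$ and $\langle E^{T,\Delta}_i x_0,x_0\rangle_{\R^n}$ are the minimal costs of the corresponding LQ optimal control problems on the truncated horizon $[t,T]$ (respectively, $[t_i,T]$ with the time partition $\Delta\cap[t_i,T]$), starting from $x_0\in\R^n$. This is a direct reformulation of Propositions~\ref{thmriccperm} and~\ref{thmriccsample} applied to these sub-problems, and it reduces the matter to bounding minimal costs. Then I will use Lemma~\ref{lemmatrice}(v), which says that for a symmetric nonnegative matrix it suffices to bound the quadratic form $\langle E\cdot,\cdot\rangle_{\R^n}$ by a quadratic in $\Vert x_0\Vert_{\R^n}$.

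To bound these minimal costs from above, I pick the zero control, which is admissible both as a permanent control (it belongs to $\L^2([t,T],\R^m)$) and as a sampled-data control (it belongs to $\PC^{\Delta\cap[t_i,T]}([t_i,T),\R^m)$). Under the zero control, the state solves $\dot{x}(\tau)=A(\tau)x(\tau)$ with $x(t)=x_0$ (or $x(t_i)=x_0$), so that $\tfrac{d}{d\tau}\Vert x(\tau)\Vert_{\R^n}^2 = 2\langle A(\tau)x(\tau),x(\tau)\rangle_{\R^n}\leq 2\Vert A_{|[t,T]}\Vert_\infty\,\Vert x(\tau)\Vert_{\R^n}^2$. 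A standard application of Gr\"onwall's lemma then yields
\[
\Vert x(\tau)\Vert_{\R^n}^2 \leq e^{2\Vert A_{|[t,T]}\Vert_\infty (\tau-t)}\,\Vert x_0\Vert_{\R^n}^2 \leq e^{2\Vert A_{|[t,T]}\Vert_\infty (T-t)}\,\Vert x_0\Vert_{\R^n}^2
\]
for every $\tau\in[t,T]$ (and analogously with $t_i$ in place of $t$).

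Plugging this estimate into the cost of the zero control, using $\langle P x(T),x(T)\rangle_{\R^n}\leq \Vert P\Vert_{\R^{n\times n}}\Vert x(T)\Vert_{\R^n}^2$ and $\langle Q(\tau)x(\tau),x(\tau)\rangle_{\R^n}\leq \Vert Q_{|[t,T]}\Vert_\infty\Vert x(\tau)\Vert_{\R^n}^2$, I obtain
\[
\langle E^T(t)x_0,x_0\rangle_{\R^n} \leq \Big(\Vert P\Vert_{\R^{n\times n}} + (T-t)\Vert Q_{|[t,T]}\Vert_\infty\Big) e^{2\Vert A_{|[t,T]}\Vert_\infty (T-t)}\,\Vert x_0\Vert_{\R^n}^2,
\]
and the same bound (with $t_i$ in place of $t$) for $\langle E^{T,\Delta}_i x_0,x_0\rangle_{\R^n}$, since the zero control and the associated trajectory are identical in both settings. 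Both inequalities hold for every $x_0\in\R^n$, so Lemma~\ref{lemmatrice}(v) gives the announced bounds on $\Vert E^T(t)\Vert_{\R^{n\times n}}$ and $\Vert E^{T,\Delta}_i\Vert_{\R^{n\times n}}$.

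There is no real obstacle here: the argument is essentially a one-line comparison principle for the value function (upper bound the minimum by the cost of an admissible control), combined with a textbook Gr\"onwall estimate. The only point worth being careful about is that the identification of $\langle E^T(t)x_0,x_0\rangle_{\R^n}$ (resp.\ $\langle E^{T,\Delta}_i x_0,x_0\rangle_{\R^n}$) with the minimal cost on $[t,T]$ (resp.\ $[t_i,T]$) is legitimate, which follows at once from Propositions~\ref{thmriccperm} and~\ref{thmriccsample} applied to the shifted problems (noting that $(E^{T,\Delta}_j)_{j\geq i}$ is the solution of $\mathrm{(SD\text{-}DRE)}$ on the truncated partition $\Delta\cap[t_i,T]$ with terminal value $P$).
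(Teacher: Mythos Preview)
Your proof is correct and follows essentially the same approach as the paper: interpret $\langle E^T(t)x_0,x_0\rangle_{\R^n}$ (resp.\ $\langle E^{T,\Delta}_i x_0,x_0\rangle_{\R^n}$) as the minimal cost of the restricted problem on $[t,T]$ (resp.\ $[t_i,T]$), bound it above by the cost of the null control, and conclude via Lemma~\ref{lemmatrice}(v). The only cosmetic difference is that the paper invokes the Duhamel formula to bound the free trajectory, whereas you use Gr\"onwall's lemma directly on $\Vert x(\tau)\Vert_{\R^n}^2$; both yield the same estimate.
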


\begin{proof}
Let us prove the first part of Lemma~\ref{lembound}. We first deal with the case $t=0$. Taking the null control in Problem~$(\mathrm{OCP}^T_y)$ and using the Duhamel formula, we deduce that its minimal cost satisfies
$$ \langle E^T(0)y,y \rangle_{\R^n} \leq \Big( \Vert P \Vert_{\R^{n\times n}} + T \Vert Q_{|[0,T]} \Vert_{\infty} \Big) e^{2T \Vert A_{|[0,T]} \Vert_{\infty} } \Vert y \Vert_{\R^n}^2  \qquad \forall y \in \R^n. $$
The result at $t=0$ then follows from (v) in Lemma~\ref{lemmatrice}. The case $0 < t < T$ can be treated similarly by considering the restriction of Problem~$(\mathrm{OCP}^T_y)$ to the time interval $[t,T]$ (instead of $[0,T]$). Finally the case~$t=T$ is obvious since $E^T(T) = P$. The second part of Lemma~\ref{lembound} is derived in a similar way.
\end{proof}

\begin{lemma}[Zero limit of finite cost trajectories at infinite time horizon]\label{leminfzero}
In the autonomous setting (see Definition~\ref{defautonomous}), under Assumption $\mathrm{(H_1)}$, for every $(x,u) \in \AC([0,+\infty),\R^n) \times \L^2 ([0,+\infty),\R^m)$ such that~$\dot{x}(t) = Ax(t) + Bu(t)$ for almost every $t \geq 0$ and satisfying
$$ \int_0^{+\infty} \Big( \langle Q x(\t) , x(\t) \rangle_{\R^n} + \langle R u(\t) , u(\t) \rangle_{\R^m} \Big) \; d\t < +\infty , $$
we have $ \lim_{t \to +\infty} x(t) = 0_{\R^n} $.
\end{lemma}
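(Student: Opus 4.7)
My plan is to show that $\|x(t)\|_{\R^n}^2$ both lies in $\L^1([0,+\infty),\R)$ and admits a limit at infinity, so that this limit must be zero.

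First, I would extract two square-integrability consequences from the finite cost assumption. Since $Q \in \S^n_{++}$ by $\mathrm{(H_1)}$ and $R \in \S^m_{++}$ by the standing hypothesis, item~(ii) of Lemma~\ref{lemmatrice} gives constants $\alpha_Q>0$ and $\alpha_R>0$ such that $\langle Q y,y\rangle_{\R^n} \geq \alpha_Q \|y\|^2_{\R^n}$ and $\langle R v,v\rangle_{\R^m} \geq \alpha_R \|v\|^2_{\R^m}$ for all $y \in \R^n$, $v \in \R^m$. Injecting this in the finite cost integral yields $x \in \L^2([0,+\infty),\R^n)$ and $u \in \L^2([0,+\infty),\R^m)$.

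Next, I would study the scalar function $\varphi(t) := \|x(t)\|^2_{\R^n}$. Since $x$ is absolutely continuous and $\dot{x}(t) = Ax(t)+Bu(t)$ for almost every $t\geq 0$, $\varphi$ is absolutely continuous on every compact subinterval with
\[
\dot\varphi(t) = 2\langle x(t),Ax(t)\rangle_{\R^n} + 2\langle x(t),Bu(t)\rangle_{\R^n} \qquad\text{for a.e.}\ t\geq 0.
\]
Using Cauchy--Schwarz and the elementary bound $2ab \leq a^2+b^2$ gives
\[
|\dot\varphi(t)| \leq \bigl(2\|A\|_{\R^{n\times n}} + \|B\|_{\R^{n\times m}}\bigr)\|x(t)\|^2_{\R^n} + \|B\|_{\R^{n\times m}}\|u(t)\|^2_{\R^m}
\]
for almost every $t\geq 0$. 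By the previous paragraph, the right-hand side lies in $\L^1([0,+\infty),\R)$, so $\dot\varphi \in \L^1([0,+\infty),\R)$.

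Finally, from the identity $\varphi(t) = \varphi(0)+\int_0^t \dot\varphi(s)\,ds$ and absolute integrability of $\dot\varphi$, the limit $\ell := \lim_{t \to +\infty}\varphi(t) = \varphi(0)+\int_0^{+\infty}\dot\varphi(s)\,ds$ exists in $\R$. But $\varphi \in \L^1([0,+\infty),\R)$ forces $\ell = 0$: indeed, if $\ell > 0$ then $\varphi(t) \geq \ell/2$ for $t$ large, contradicting integrability. Hence $\|x(t)\|^2_{\R^n} \to 0$, i.e., $x(t)\to 0_{\R^n}$ as $t\to +\infty$. The argument is short and I do not anticipate any real obstacle; the only delicate point is ensuring that $\dot\varphi$ is genuinely absolutely integrable, which is why the splitting via $2ab\leq a^2+b^2$ is important so that every term on the right-hand side of the bound on $|\dot\varphi|$ is in $\L^1$.
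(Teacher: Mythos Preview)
Your proof is correct and follows essentially the same route as the paper's: both show $x\in\L^2$ via positive definiteness of $Q$, define $\varphi(t)=\|x(t)\|^2_{\R^n}$, establish $\dot\varphi\in\L^1$ from the dynamics, deduce that $\varphi$ has a limit, and then argue that $\varphi\in\L^1$ forces this limit to be zero. The only cosmetic differences are that you are more explicit about the bound on $|\dot\varphi|$ (via $2ab\le a^2+b^2$), and that you derive $u\in\L^2$ from the finite cost even though it is already part of the hypothesis.
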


\begin{proof}
Since $Q \in \S^n_{++}$, we have $\Vert x(t) \Vert^2_{\R^n} \leq \frac{1}{\rho_\min (Q)} \langle Q x(t) , x(t) \rangle_{\R^n}$ for all $t \geq 0$. Using the assumptions we deduce that $x \in \L^2 ([0,+\infty),\R^m)$. Let us introduce $X \in \AC([0,+\infty),\R)$ defined by~$X(t) := \Vert x(t) \Vert^2_{\R^n} \geq 0$ for all $t \geq 0$. Since~$\dot{X}(t) = 2 \langle A x(t) + Bu(t), x(t) \rangle_{\R^n} $ for almost every $t \geq 0$, we deduce that~$\dot{X} \in \L^1([0,+\infty),\R)$ and thus~$X(t)$ admits a limit $\ell \geq 0$ when $t \to +\infty$. By contradiction let us assume that $\ell > 0$. Then there exists $s \geq 0$ such that $X(t) \geq \frac{\ell}{2} > 0$ for all~$t \geq s$. We get that
\begin{multline*}
\int_0^{\overline{t}} \Big( \langle Q x(\t) , x(\t) \rangle_{\R^n} + \langle R u(\t) , u(\t) \rangle_{\R^m} \Big) \; d\t \geq \rho_\min (Q) \left( \int_0^{\overline{t}} X(\t) \; d\t \right) \\
= \rho_\min (Q)  \int_0^{s} X(\t) \; d\t + \int_s^{\overline{t}} X(\t) \; d\t   
\geq \rho_\min (Q) \left( \int_0^{s} X(\t) \; d\t + (\overline{t}-s) \dfrac{\ell}{2}  \right) \qquad \forall  \overline{t} \geq s.
\end{multline*}
A contradiction is obtained by letting $\overline{t} \to + \infty$.
\end{proof}

\subsection{Proof of Proposition~\ref{thmriccsampleinf}}\label{appthmriccsampleinf}

This proof is inspired from the proof of Proposition~\ref{thmriccperminf} (see \cite[p.153]{bressan2007}, \cite[Theorem~7 p.198]{lee1986} or \cite[Theorem~4.13]{trelat2005}) and is an adaptation to the sampled-data control case. We denote by~$\Delta_N := \Delta \cap [0,t_N]$ the~$h$-uniform time partition of the interval~$[0,t_N]$ for every $N \in \N^*$.

\paragraph{Existence and uniqueness of the optimal solution.}
Let $x_0 \in \R^n$. For every $u \in \L^2([0,+\infty),\R^m)$, we denote by $x(\cdot,u) \in \AC([0,+\infty),\R^n)$ the unique solution to the Cauchy problem
$$ \left\lbrace 
\begin{array}{l}
\dot{x}(t) = A x(t) + Bu(t) \qquad \text{for a.e.}\  t \geq 0, \\[5pt]
x(0)= x_0.
\end{array}
\right. $$
We define the cost function
$$ \fonction{\CC}{\L^2([0,+\infty),\R^m)}{\R \cup \{ +\infty \} }{u}{\CC (u) := \di \int_0^{+\infty} \Big( \langle Q x(\t,u) , x(\t,u) \rangle_{\R^n} + \langle R u(\t) , u(\t) \rangle_{\R^m} \Big) \; d\t. } $$
Problem~$(\mathrm{OCP}^{\infty,\Delta}_{x_0})$ can be recast as $ \min \{ \CC(u)  \mid  u \in  \PC^\Delta ([0,+\infty),\R^m)\}$. Since $\mathrm{(H}^h_2\mathrm{)}$ is satisfied, we have
$$ \CC^* := \inf\{ \CC(u)  \mid  u \in  \PC^\Delta ([0,+\infty),\R^m)\} < +\infty. $$
Let us consider a minimizing sequence~$(u_k)_{k \in \N} \subset \PC^\Delta ([0,+\infty),\R^m)$ and, without loss of generality, we assume that $\CC (u_k) < +\infty$ for every $k \in \N$. Since $R \in \S^n_{++}$, we deduce that the sequence $(u_k)_{k \in \N}$ is bounded in~$\L^2([0,+\infty),\R^m)$ and thus, up to a subsequence (that we do not relabel), converges weakly to some $u^* \in \L^2([0,+\infty),\R^m)$. Since $\PC^\Delta ([0,+\infty),\R^m)$ is a weakly closed subspace of $\L^2([0,+\infty),\R^m)$, it follows that $u^* \in \PC^\Delta ([0,+\infty),\R^m)$. Moreover, denoting by $x_k := x(\cdot,u_k)$ for every $k \in \N$, the Duhamel formula gives
$$ x_k (t) = e^{tA} x_0 + \int_0^t e^{(t-\t)A} B u_k(\t) \; d\t  \qquad \forall t \geq 0 \qquad \forall k \in \N.  $$
By weak convergence we get that, for every $t\geq 0$, the sequence $(x_k(t))_{k \in \N}$ converges pointwise on $[0,+\infty)$ to
$$ x^* (t) := e^{tA} x_0 + \int_0^t e^{(t-\t)A} B u^*(\t) \; d\t . $$
Then, obviously, $x^* = x(\cdot,u^*)$. Moreover, by Fatou's lemma (see, e.g.,~\cite[Lemma~4.1]{brezis2011}) and by weak convergence, we get that
\begin{multline*}
\CC^* = \lim_{k \to + \infty} \CC(u_k) = \liminf_{k \to + \infty} \CC(u_k)
= \liminf_{k \to +\infty}  \int_0^{+\infty}\Big( \langle Q x_k(\t) , x_k(\t) \rangle_{\R^n} + \langle R u_k(\t) , u_k(\t) \rangle_{\R^m} \Big) \; d\t  \\
 \geq \liminf_{k \to +\infty}  \int_0^{+\infty}  \langle Q x_k(\t) , x_k(\t) \rangle_{\R^n}  \; d\t + \liminf_{k \to +\infty} \Vert u_k \Vert^2_{\L^2_R} \\
 \geq  \int_0^{+\infty} \langle Q x^*(\t) , x^*(\t) \rangle_{\R^n} \; d\t  + \Vert u^* \Vert^2_{\L^2_R} = \int_0^{+\infty} \left( \langle Q x^*(\t) , x^*(\t) \rangle_{\R^n} + \langle R u^*(\t) , u^*(\t) \rangle_{\R^m} \right) d\t = \CC(u^*)
\end{multline*}
where the norm defined by $ \Vert u \Vert_{\L^2_R} := ( \int_0^{+\infty} \langle R u(\t),u(\t) \rangle_{\R^m} \; d\t )^{1/2}$ for every $u \in \L^2([0,+\infty),\R^m)$ is equivalent to the usual one since $R \in \S^m_{++}$. We conclude that $(x^*,u^*)$ is an optimal solution to $(\mathrm{OCP}^{\infty,\Delta}_{x_0})$.

\medskip

Let us prove uniqueness. Note that $x(\cdot,\lambda u + (1-\lambda) v) = \lambda x(\cdot,u) + (1-\lambda) x(\cdot,v)$ for all $u$, $v \in \L^2([0,+\infty),\R^m)$ and all $\lambda \in [0,1]$. Hence, since moreover $Q \in \S^n_{++}$ and $R \in \S^m_{++}$, the cost function $\CC$ is strictly convex and thus the optimal solution to $(\mathrm{OCP}^{\infty,\Delta}_{x_0})$ is unique.

\paragraph{Existence of a solution to $\mathrm{(SD\text{-}ARE)}$.}
Let us introduce the sequence $(D_i )_{i \in \N} \subset \R^{n\times n}$ being the solution to the forward matrix induction given by
$$
\left\lbrace
\begin{array}{l}
D_{i+1}-D_i = - h \FF(D_{i},h) \qquad \forall i \in \N, \\[5pt]
D_0 = 0_{\R^{n\times n}}.
\end{array}
\right.
$$ 
Taking $P = 0_{\R^{n\times n}}$, one has $D_i = E^{t_N,\Delta_N}_{N-i}$ for every $i = 0, \ldots,N$ and every $N \in \N^*$. Hence the sequence~$ ( D_i )_{i \in \N} $ is well defined and is in $\S^n_+$.

\medskip

Our aim now is to prove that the sequence $ ( D_i )_{i \in \N}$ converges. Let $x_0 \in \R^n$. We denote by
$$ M := \int_0^{+\infty} \Big( \langle Q x(\t) , x(\t) \rangle_{\R^n} + \langle R u(\t) , u(\t) \rangle_{\R^m} \Big) \; d\t < +\infty $$
where $(x,u) \in \AC([0,+\infty),\R^n) \times \PC^\Delta ([0,+\infty),\R^m)$ is the pair provided in $\mathrm{(H}_2^h\mathrm{)}$. Since the minimal cost of~$(\mathrm{OCP}^{t_N,\Delta_N}_{x_0})$ (with $P = 0_{\R^{n\times n}}$) is given by $\langle E^{t_N,\Delta_N}_0 x_0 , x_0 \rangle_{\R^n} = \langle D_N x_0,x_0 \rangle_{\R^n}$ and is increasing with respect to $N$, we deduce that $\langle D_N x_0,x_0 \rangle_{\R^n}$ is increasing with respect to $N$. Since it is also bounded by $M$, we deduce that it converges when $N \to +\infty$. By~(vii) of Lemma~\ref{lemmatrice}, we conclude that the sequence $ ( D_i )_{i \in \N}$ in $\S^n_+$ converges to some $D \in \S^n_+$ which satisfies~$\FF(D,h) = 0_{\R^{n\times n}}$ by continuity of $\FF$ (see Lemma~\ref{lemF}).

\paragraph{Positive definiteness of $D$.}
Let $x_0 \in \R^n \bs \{ 0 \}$. Since $Q \in \S^n_{++}$, the minimal cost of~$(\mathrm{OCP}^{t_N,\Delta_N}_{x_0})$ (with~$P = 0_{\R^{n\times n}}$) given by $\langle E^{t_N,\Delta_N}_0 x_0 , x_0 \rangle_{\R^n} = \langle D_N x_0 , x_0 \rangle_{\R^n} $ for every~$N \in \N^*$ is positive. Since $\langle D_N x_0 , x_0 \rangle_{\R^n}$ is increasing with respect to $N$ and converges to $\langle D x_0 , x_0 \rangle_{\R^n}$, we deduce that $\langle D x_0 , x_0 \rangle_{\R^n} > 0$ and thus~$D \in \S^n_{++}$.

\paragraph{Lower bound of the minimal cost of $(\mathrm{OCP}^{\infty,\Delta}_{x_0})$.}
Our aim in this paragraph is to prove that, if~$Z \in \S^n_+$ satisfies $\FF(Z,h) = 0_{\R^{n\times n}}$, then $\langle Z x_0 , x_0 \rangle_{\R^n}$ is a lower bound of the minimal cost of $(\mathrm{OCP}^{\infty,\Delta}_{x_0})$ for every~$x_0 \in \R^n$.

Let $x_0 \in \R^n$. Let $(x,u) \in \AC([0,+\infty),\R^n) \times \PC^\Delta([0,+\infty),\R^m)$ be a pair such that~$\dot{x}(t) = Ax(t) + Bu(t)$ for almost every $t \geq 0$ and $x(0) = x_0$. Our objective is to prove that
$$ \langle Z x_0 , x_0 \rangle_{\R^n} \leq \int_{0}^{+\infty} \Big( \langle Q x(\t) , x(\t) \rangle_{\R^n} + \langle R u(\t) , u(\t) \rangle_{\R^m} \Big) \; d\t. $$
If the integral at the right-hand side is infinite, the result is obvious. Let us assume that the integral is finite. By Lemma~\ref{leminfzero}, $x(t)$ tends to $0_{\R^n}$ when $t \to +\infty$. By Proposition~\ref{thmriccsample}, the minimal cost of~$(\mathrm{OCP}^{t_N,\Delta_N}_{x_0})$ with~$P = Z$ is given by~$\langle E^{t_N,\Delta_N}_0 x_0 , x_0 \rangle_{\R^n}$ for every $N \in \N^*$. Since~$E^{t_N,\Delta_N}_N = Z$ and~$\FF(Z,h) = 0_{\R^{n\times n}}$, from the backward matrix induction, we get that $E^{t_N,\Delta_N}_i = Z$ for every $i = 0, \ldots ,N$ and every $N \in \N^*$. In particular the minimal cost of $(\mathrm{OCP}^{t_N,\Delta_N}_{x_0})$ with $P = Z$ is given by $\langle Z x_0 , x_0 \rangle_{\R^n}$ for every $N \in \N^*$. Hence
$$ \langle Z x_0 , x_0 \rangle_{\R^n} \leq \langle Z x(t_N),x(t_N) \rangle_{\R^n} + \int_0^{t_N} \Big( \langle Q x(\t) , x(\t) \rangle_{\R^n} + \langle R u(\t) , u(\t) \rangle_{\R^m} \Big) \; d\t. $$
Taking the limit $N \to +\infty$, the proof is complete.

\paragraph{Upper bound of the minimal cost of $(\mathrm{OCP}^{\infty,\Delta}_{x_0})$.}
Our aim in this paragraph is to prove that, if~$Z \in \S^n_+$ satisfies $\FF(Z,h) = 0_{\R^{n\times n}}$, then $\langle Z x_0 , x_0 \rangle_{\R^n}$ is an upper bound of the minimal cost of $(\mathrm{OCP}^{\infty,\Delta}_{x_0})$ for every~$x_0 \in \R^n$. Denote by $\MM := \MM (Z,h)$, $\NN := \NN(Z,h)$ and $\GG := \GG(Z,h)$. We similarly use the notations~$\MM_i$, $\NN_i$ and~$\GG_i$ for $i=1,2,3$ (see Section~\ref{secF} for details). 

Let $x_0 \in \R^n$. Let $x \in \AC([0,+\infty),\R^n)$ be the unique solution to
$$
\left\lbrace
\begin{array}{l}
\dot{x}(t) = A x(t) - B \NN^{-1} \MM^\top x(t_i) \qquad \text{for a.e.}\  t \in [t_i,t_{i+1}) \qquad \forall i \in \N \\[5pt]
x(0) = x_0,
\end{array}
\right.
$$ 
and let $u \in \PC^\Delta([0,+\infty),\R^m)$ defined by $u_i := - \NN^{-1} \MM^\top x(t_i)$ for every $i \in \N$. In particular~$\dot{x}(t) = Ax(t) + Bu(t)$ for almost every $t \geq 0$ and $x(0) = x_0$. 

By the Duhamel formula, we have $x(t) = (\alpha_i(t) - \beta_i(t)) x(t_i)$ for all $t \in [t_i,t_{i+1})$ and every $i \in \N$, where
$$ \alpha_i (t) := e^{(t-t_i)A} \quad \text{and} \quad \beta_i (t) := \left( \int_{t_i}^t e^{\xi A} \; d\xi \right) B \NN^{-1} \MM^\top  \qquad \forall t \in [t_i,t_{i+1}) \qquad \forall i \in \N. $$
Using the above expressions of $\alpha_i$ and $\beta_i$, and after some computations, we get that
$$ \int_{t_i}^{t_{i+1}}\Big( \langle Q x(\t) , x(\t) \rangle_{\R^n} + \langle R u(\t) , u(\t) \rangle_{\R^m} \Big) \; d\t = h \langle W_1 x(t_i) , x(t_i) \rangle_{\R^n} \qquad \forall i \in \N $$
where $ W_1 := \GG_1 + \MM \NN^{-1} \NN_2 \NN^{-1} \MM^\top - 2 \MM_2 \NN^{-1} \MM^\top + \MM \NN^{-1} \NN_1 \NN^{-1} \MM^\top $. On the other hand, using again the above expressions of $\alpha_i$ and $\beta_i$, we compute
$$ \langle Z x(t_i),x(t_i) \rangle_{\R^n} - \langle Z x(t_{i+1}),x(t_{i+1}) \rangle_{\R^n} = h \langle W_2 x(t_i),x(t_i) \rangle_{\R^n} \qquad \forall i \in \N $$
where $W_2 := - \GG_2 + 2 \MM_1 \NN^{-1} \MM^\top - \MM \NN^{-1} \NN_3 \NN^{-1} \MM$.
Using that~$\FF(Z,h) = \MM \NN^{-1} \MM^\top - \GG = 0_{\R^{n\times n}}$, we obtain $W_2 - W_1 = 0_{\R^{n\times n}}$ and thus $W_2 = W_1$. We deduce that
$$ \int_{t_i}^{t_{i+1}} \Big( \langle Q x(\t) , x(\t) \rangle_{\R^n} + \langle R u(\t) , u(\t) \rangle_{\R^m} \Big) \; d\t = \langle Z x(t_i),x(t_i) \rangle_{\R^n} - \langle Z x(t_{i+1}),x(t_{i+1}) \rangle_{\R^n} \qquad \forall i \in \N. $$
Summing these equalities and using that $Z \in \S^n_+$, we get
$$ \int_{0}^{t_{N}}\Big( \langle Q x(\t) , x(\t) \rangle_{\R^n} + \langle R u(\t) , u(\t) \rangle_{\R^m} \Big) \; d\t = \langle Z x_0 , x_0 \rangle_{\R^n} - \langle Z x(t_N) ,x(t_N) \rangle_{\R^n} \leq \langle Z x_0 , x_0 \rangle_{\R^n}  \qquad \forall N \in \N^*. $$
Passing to the limit $N \to +\infty$, we finally obtain
$$ \int_{0}^{+\infty} \Big( \langle Q x(\t) , x(\t) \rangle_{\R^n} + \langle R u(\t) , u(\t) \rangle_{\R^m} \Big) \; d\t \leq \langle Z x_0 , x_0 \rangle_{\R^n}.  $$
We deduce that $\langle Z x_0 , x_0 \rangle_{\R^n}$ is an upper bound of the minimal cost of $(\mathrm{OCP}^{\infty,\Delta}_{x_0})$ for every $x_0 \in \R^n$.

\paragraph{Minimal cost of $(\mathrm{OCP}^{\infty,\Delta}_{x_0})$ and state feedback control.} 
Let $x_0 \in \R^n$. By the previous paragraphs, since $D \in \S^n_{++} \subset \S^n_+$ satisfies $\FF(D,h) = 0_{\R^{n\times n}}$, the minimal cost of $(\mathrm{OCP}^{\infty,\Delta}_{x_0})$ is equal to $\langle D x_0 , x_0 \rangle_{\R^n}$. Moreover, by the previous paragraph, denoting by $x \in \AC([0,+\infty),\R^n)$  the unique solution to
$$
\left\lbrace
\begin{array}{l}
\dot{x}(t) = A x(t) - B \NN(D,h)^{-1} \MM(D,h)^\top x(t_i) \qquad \text{for a.e.}\  t \in [t_i,t_{i+1}) \qquad \forall i \in \N \\[5pt]
x(0) = x_0,
\end{array}
\right.
$$ 
and by $u \in \PC^\Delta([0,+\infty),\R^m)$ the control defined by $u_i := - \NN(D,h)^{-1} \MM(D,h)^\top x(t_i)$ for every $i \in \N$, we get that $\dot{x}(t) = Ax(t) + Bu(t)$ for almost every $t \geq 0$ and $x(0) = x_0$, and
$$ \int_{0}^{+\infty} \Big( \langle Q x(\t) , x(\t) \rangle_{\R^n} + \langle R u(\t) , u(\t) \rangle_{\R^m} \Big) \; d\t \leq \langle D x_0 , x_0 \rangle_{\R^n}.  $$
Since $\langle D x_0 , x_0 \rangle_{\R^n}$ is the minimal cost of $(\mathrm{OCP}^{\infty,\Delta}_{x_0})$, the above inequality is actually an equality. By uniqueness of the optimal solution~$(x^*,u^*)$, we get that $(x,u) = (x^*,u^*)$ and thus the optimal sampled-data control~$u^*$ is given by $u^*_i = - \NN(D,h)^{-1} \MM(D,h)^\top x^*(t_i)$ for every $i \in \N$.

\paragraph{Uniqueness of the solution to $\mathrm{(SD\text{-}ARE)}$.}
Assume that there exist $Z_1$, $Z_2 \in \S^n_+$ satisfying $\FF(Z_1,h) = \FF(Z_2,h) = 0_{\R^{n\times n}}$. By the previous paragraphs, the minimal cost of $(\mathrm{OCP}^{\infty,\Delta}_{x_0})$ is equal to $\langle Z_1 x_0 , x_0 \rangle_{\R^n} = \langle Z_2 x_0 , x_0 \rangle_{\R^n} $ for every $x_0 \in \R^n$. By (vi) of Lemma~\ref{lemmatrice}, we conclude that $Z_1 = Z_2$.

\paragraph{End of the proof.}
Defining $E^{\infty,\Delta} := D  \in \S^n_{++}$, the proof of Proposition~\ref{thmriccsampleinf} is complete.

\subsection{Proof of Lemma~\ref{lemimportant}}\label{applemimportant}

This proof is inspired from the techniques developed in~\cite{nesic1999} for preserving the stabilizing property of controls of nonlinear systems under sampling. We set $W := BR^{-1}B^\top E^\infty \in \R^{n\times n}$ where $E^\infty$ is given by Proposition~\ref{thmriccperminf}. Note that $E^\infty W \in \S^n_+$. Using $\mathrm{(P\text{-}ARE)}$, we obtain
$$ 2 \langle E^\infty y , (A-W)y \rangle_{\R^n} = - \langle Q y , y \rangle_{\R^n} - \langle E^\infty W y, y \rangle_{\R^m} \leq -\rho_\min (Q) \Vert y \Vert^2_{\R^n} \qquad \forall y \in \R^n $$
where $ \rho_\min (Q) > 0  $ since $Q \in \S^n_{++}$. Let $\overline{h} > 0$ be such that
$$ h \Vert A - W \Vert_{\R^{n\times n}} e^{h \Vert A \Vert_{\R^{n\times n}}} < 1  \qquad \text{and} \qquad 2 \rho_\max(E^\infty W) \dfrac{h \Vert A-W \Vert_{\R^{n\times n}} e^{h \Vert A \Vert_{\R^{n\times n}}}}{1 - h \Vert A-W \Vert_{\R^{n\times n}} e^{h \Vert A \Vert_{\R^{n\times n}}} } \leq \dfrac{\rho_\min (Q)}{2} $$
for every $h\in(0,\overline{h}]$. 

Now, let $x_0 \in \R^n$ and let $\Delta = \{ t_i \}_{i \in \N}$ be a $h$-uniform time partition of the interval $[0,+\infty)$ satisfying~$h\in(0,\overline{h}]$. Let~$x \in \AC([0,+\infty),\R^n)$ be the unique solution to
$$
\left\lbrace
\begin{array}{l}
\dot{x}(t) = A x(t) - W x(t_i) \qquad \text{for a.e.}\  t \in [t_i,t_{i+1}) \qquad \forall i \in \N \\[5pt]
x(0) = x_0,
\end{array}
\right.
$$ 
and let $u \in \PC^\Delta([0,+\infty),\R^m)$ be defined by $u_i := - R^{-1} B^\top E^\infty x(t_i)$ for every $i \in \N$. In particular~$\dot{x}(t) = Ax(t) + Bu(t)$ for almost every $t \geq 0$ and $x(0) = x_0$. 

On the one hand, we have
\begin{multline*}
\Vert x(t) - x(t_i) \Vert_{\R^n}  = \left\Vert \int_{t_i}^t \Big( Ax(\t) - W x(t_i) \Big) \; d\t \right\Vert_{\R^n} = \left\Vert \int_{t_i}^t \Big( A(x(\t)-x(t_i)) + (A- W) x(t_i) \Big) \; d\t \right\Vert_{\R^n} \\
\leq h \Vert A - W \Vert_{\R^{n\times n}} \Vert x(t_i) \Vert_{\R^n} + \Vert A \Vert_{\R^{n\times n}} \int_{t_i}^t \Vert x(\t) - x(t_i) \Vert_{\R^n} \; d\t 
\end{multline*}
and, by the Gr\"onwall lemma (see \cite[Appendix~C.3]{sontag1998}), we get that
$$ \Vert x(t) - x(t_i) \Vert_{\R^n}  \leq h \Vert A - W \Vert_{\R^{n\times n}} e^{h \Vert A \Vert_{\R^{n\times n}}} \Vert x(t_i) \Vert_{\R^n} \qquad \forall t \in [t_i,t_{i+1}) \qquad \forall i \in \N. $$
Since $\Vert x(t_i) \Vert_{\R^{n\times n}} \leq \Vert x(t)-x(t_i) \Vert_{\R^{n\times n}} + \Vert x(t) \Vert_{\R^{n\times n}}$ and $ h \Vert A - W \Vert_{\R^{n\times n}} e^{h \Vert A \Vert_{\R^{n\times n}}} < 1 $, we get that
$$ \Vert x(t_i) \Vert_{\R^{n\times n}} \leq \dfrac{1}{1-h \Vert A - W \Vert_{\R^{n\times n}} e^{h \Vert A \Vert_{\R^{n\times n}}}} \Vert x(t) \Vert_{\R^{n\times n}},   \qquad \forall t \in [t_i,t_{i+1}) \qquad \forall i \in \N $$
and thus
$$ \Vert x(t) - x(t_i) \Vert_{\R^{n\times n}} \leq \dfrac{h \Vert A - W \Vert_{\R^{n\times n}} e^{h \Vert A \Vert_{\R^{n\times n}}}}{1-h \Vert A - W \Vert_{\R^{n\times n}} e^{h \Vert A \Vert_{\R^{n\times n}}}} \Vert x(t) \Vert_{\R^{n\times n}},  \qquad \forall t \in [t_i,t_{i+1}) \qquad \forall i \in \N.  $$
On the other hand, we have
\begin{multline*}
\dfrac{d}{dt} \langle E^\infty x(t),x(t) \rangle_{\R^n} = 2 \langle E^\infty x(t) , \dot{x}(t) \rangle_{\R^n} 
= 2 \langle E^\infty x(t) , A x(t) - W x(t_i) \rangle_{\R^n} \\
= 2 \langle E^\infty x(t) , (A-W) x(t)  \rangle_{\R^n} + 2 \langle E^\infty x(t) , W (x(t)-x(t_i))  \rangle_{\R^n}  \qquad \text{for a.e.}\  t \in [t_i,t_{i+1}) \qquad \forall i \in \N.
\end{multline*}
We deduce that
\begin{multline*}
\dfrac{d}{dt}  \langle E^\infty x(t),x(t) \rangle_{\R^n}
\leq \left( - \rho_\min (Q) + 2 \rho_\max(E^\infty W) \dfrac{h \Vert A-W \Vert_{\R^{n\times n}} e^{h \Vert A \Vert_{\R^{n\times n}}}}{1 - h \Vert A-W \Vert_{\R^{n\times n}} e^{h \Vert A \Vert_{\R^{n\times n}}} } \right) \Vert x(t) \Vert^2_{\R^n}  \\
\leq - \dfrac{\rho_\min (Q)}{2}  \Vert x(t) \Vert^2_{\R^n} \leq - \dfrac{\rho_\min (Q)}{2 \rho_\max(E^\infty) } \langle E^\infty x(t) , x(t) \rangle_{\R^n} \qquad \text{for a.e.}\  t \geq 0.
\end{multline*}
We deduce from the Gr\"onwall lemma that
$$ \Vert x(t) \Vert^2_{\R^n} \leq \dfrac{1}{\rho_\min (E^\infty)} \langle E^\infty x(t) , x(t) \rangle_{\R^n} \leq \dfrac{1}{\rho_\min (E^\infty)} \langle E^\infty x_0 , x_0 \rangle_{\R^n} e^{- \frac{\rho_\min (Q)}{2 \rho_\max(E^\infty) } t} \qquad \forall t \geq 0. $$ 
We deduce that
\begin{multline*}
 \int_0^{+\infty} \langle Q x(\t) , x(\t) \rangle_{\R^n} \; d\t \leq \dfrac{\rho_\max(Q)}{\rho_\min (E^\infty)} \langle E^\infty x_0 , x_0 \rangle_{\R^n} \int_0^{+\infty} e^{- \frac{\rho_\min (Q)}{2 \rho_\max(E^\infty) } \t} \; d\t \\
 = \dfrac{2 \rho_\max(Q) \rho_\max(E^\infty)}{\rho_\min (Q) \rho_\min (E^\infty)} \langle E^\infty x_0 , x_0 \rangle_{\R^n} < +\infty. 
\end{multline*}
Moreover, using that $t_i = ih$ for every $i \in \N$, we have
\begin{multline*}
 \int_0^{+\infty} \langle R u(\t) , u(\t) \rangle_{\R^n} \; d\t \leq h \rho_\max(R) \sum_{i \in \N} \Vert u_i \Vert^2_{\R^m} \leq h \rho_\max(R) \Vert R^{-1} B^\top E^\infty \Vert^2_{\R^{m \times n}} \sum_{i \in \N} \Vert x(t_i) \Vert^2_{\R^m} \\
  \leq h \dfrac{\rho_\max(R)}{\rho_\min (E^\infty)}  \Vert R^{-1} B^\top E^\infty \Vert^2_{\R^{m \times n}}  \langle E^\infty x_0 , x_0 \rangle_{\R^n} \sum_{i \in \N} \Big( e^{- \frac{\rho_\min (Q)}{2 \rho_\max(E^\infty) } h} \Big)^i  \\
  = h \dfrac{\rho_\max(R)}{\rho_\min (E^\infty)}  \Vert R^{-1} B^\top E^\infty \Vert^2_{\R^{m \times n}}  \langle E^\infty x_0 , x_0 \rangle_{\R^n} \dfrac{1}{1 - e^{- \frac{\rho_\min (Q)}{2 \rho_\max(E^\infty) } h}} \\
  \leq  \dfrac{2 \rho_\max(R) \rho_\max(E^\infty)}{\rho_\min ( Q ) \rho_\min (E^\infty)}  \Vert R^{-1} B^\top E^\infty \Vert^2_{\R^{m \times n}}  \langle E^\infty x_0 , x_0 \rangle_{\R^n} e^{\frac{\rho_\min (Q)}{2 \rho_\max(E^\infty) } \overline{h}}  < +\infty.
\end{multline*}
Taking
$$ \overline{c} := \dfrac{2 \rho_\max(E^\infty)}{\rho_\min ( Q ) \rho_\min (E^\infty)} \Big( \rho_\max ( Q ) +\rho_\max ( R )  \Vert R^{-1} B^\top E^\infty \Vert^2_{\R^{m \times n}} e^{\frac{\rho_\min (Q)}{2 \rho_\max(E^\infty) } \overline{h}} \Big) \geq 0 ,$$
the proof is complete.

\subsection{Proof of Theorem~\ref{thmmain1}}\label{appthmmain1}

\paragraph*{First item.}
This proof is inspired from the classical Lax theorem in numerical analysis (see \cite[p.73]{polyanin2018}). Let~$\varepsilon > 0$. We define the map
$$ \fonction{\varphi}{[0,T] \times [0,T]}{\R^{n\times n}}{(t,h)}{\varphi(t,h) := \FF(t,E^T(t),h).} $$
By continuity of $E^T$ on $[0,T]$ and by Lemma~\ref{lemF}, the map $\varphi$ is uniformly continuous on the compact set~$[0,T] \times [0,T]$. Hence there exists $\delta > 0$ such that
$$ \Vert \varphi(t_2,h_2) - \varphi(t_1,h_1) \Vert_{\R^{n\times n}} \leq \dfrac{\varepsilon}{2Te^{cT}} $$
for all $(t_1,h_1)$, $ (t_2,h_2) \in [0,T] \times [0,T]$ satisfying $\vert t_2-t_1 \vert + \vert h_2 - h_1 \vert \leq \delta$, where $c \geq 0$ is the constant given in Lemma~\ref{lemF} associated to the compact set $\KK := [0,T] \times \K \times [0,T]$ where
$$ \K := \left\lbrace E \in \S^n_+ \mid \Vert E \Vert_{\R^{n\times n}} \leq \Big( \Vert P \Vert_{\R^{n\times n}} + T \Vert Q_{|[0,T]} \Vert_{\infty} \Big) e^{2T \Vert A_{|[0,T]} \Vert_{\infty} }  \right\rbrace . $$
In the sequel we consider a time partition $\Delta = \{ t_i \}_{i=0,\ldots,N}$ of the interval~$[0,T]$ such that $0 < \Vert \Delta \Vert \leq \delta$. Note that
\begin{multline*}
 E^{T,\Delta}_i = E^{T,\Delta}_{i+1} - h_{i+1} \FF (t_{i+1},E^{T,\Delta}_{i+1},h_{i+1}) \\
  \text{and} \quad E^T(t_i) = E^T(t_{i+1}) - h_{i+1} \FF(t_{i+1},E^T(t_{i+1}),h_{i+1}) +\eta_{i+1} \qquad \forall i=0,\ldots,N-1
\end{multline*}
where
$$ \eta_{i+1} := E^T(t_i)-E^T(t_{i+1}) + h_{i+1} \FF (t_{i+1},E^T(t_{i+1}),h_{i+1}) \qquad \forall i=0,\ldots,N-1 .$$
By Lemmas~\ref{lemF} and~\ref{lembound}, we have
$$ \Vert E^T(t_i)-E^{T,\Delta}_i \Vert_{\R^{n\times n}} \leq (1+c h_{i+1} ) \Vert E^T(t_{i+1})-E^{T,\Delta}_{i+1} \Vert_{\R^{n\times n}} + \Vert \eta_{i+1} \Vert_{\R^{n\times n}} \qquad \forall i=0,\ldots,N-1 .$$
It follows from the backward discrete Gr\"onwall lemma (see Lemma~\ref{lemgronwall}) that
$$ \Vert E^T(t_i)-E^{T,\Delta}_i \Vert_{\R^{n\times n}} \leq \sum_{j=i+1}^N e^{ c \sum_{q=i+1}^{j-1} h_q } \Vert \eta_{j} \Vert_{\R^{n\times n}} \leq e^{cT} \sum_{j=1}^N \Vert \eta_{j} \Vert_{\R^{n\times n}}  \qquad \forall i=0,\ldots,N-1 . $$
Since 
\begin{multline*}
\eta_{j} = h_j \Big( \FF(t_j,E^T(t_j),h_j) - \FF(t_j,E^T(t_j),0) \Big) 
+ \int_{t_{j-1}}^{t_j} \left( \FF(t_j,E^T(t_j),0) - \FF(\t,E^T(\t),0) \right) d\t \\
= h_j \Big( \varphi(t_j,h_j) - \varphi(t_j,0) \Big) + \int_{t_{j-1}}^{t_j} \left( \varphi(t_j,0) - \varphi(\t,0) \right) d\t \qquad \forall j=1,\ldots,N
\end{multline*}
we obtain, by uniform continuity of $\varphi$ and using that $0 < \Vert \Delta \Vert \leq \delta$, 
$$ \Vert \eta_{j} \Vert_{\R^{n\times n}} \leq 2 h_j \dfrac{\varepsilon }{2Te^{cT}} = h_j \dfrac{\varepsilon }{Te^{cT}} \qquad \forall j=1,\ldots,N. $$
We conclude that
$$ \Vert E^T(t_i)-E^{T,\Delta}_i \Vert_{\R^{n\times n}} \leq e^{cT} \sum_{j=1}^N \Vert \eta_{j} \Vert_{\R^{n\times n}} \leq e^{cT} \sum_{j=1}^N h_j \dfrac{\varepsilon }{Te^{cT}} = \dfrac{\varepsilon}{T} \sum_{j=1}^N h_j = \varepsilon  \qquad \forall i=0,\ldots,N-1. $$
The proof is complete.

\paragraph*{Second item.}
The second item of Theorem~\ref{thmmain1} is well known  and follows from the proof of Proposition~\ref{thmriccperminf} (see \cite[p.153]{bressan2007}, \cite[Theorem~7]{lee1986} or \cite[Theorem~4.13]{trelat2005}).

\paragraph*{Third item.}
This result follows from the proof of Proposition~\ref{thmriccsampleinf}. Indeed, using the notations from Appendix~\ref{appthmriccsampleinf}, it is clear that 
$$ \lim\limits_{N \to + \infty} E^{t_N,\Delta_N}_i = \lim\limits_{N \to + \infty} D_{N-i} = D = E^{\infty,\Delta} \qquad \forall i \in \N. $$

\paragraph*{Fourth item.}
By contradiction let us assume that $E^{\infty,\Delta}$ does not converge to $E^\infty$ when $h \to 0$. Then there exists $\varepsilon > 0$ and a positive sequence $(h_k)_{k \in \N}$ converging to $0$ such that $\Vert E^{\infty,\Delta_k} - E^\infty \Vert_{\R^{n\times n}} \geq \varepsilon$ for every $k \in \N$, where $\Delta_k$ stands for the $h_k$-uniform time partition of the interval $[0,+\infty)$. Without loss of generality, we assume that $0 < h_k \leq \overline{h}$ for every $k \in \N$. It follows from Proposition~\ref{thmriccsampleinf} and from Lemma~\ref{lemimportant} that the minimal cost of $(\mathrm{OCP}^{\infty,\Delta_k}_{x_0})$ satisfies 
$$ \langle E^{\infty,\Delta_k} x_0 , x_0 \rangle_{\R^{n}} \leq \overline{c} \langle E^{\infty} x_0 , x_0 \rangle_{\R^{n}} \leq \overline{c} \Vert E^\infty \Vert_{\R^{n\times n} } \Vert x_0 \Vert_{\R^n}^2  \qquad \forall x_0 \in \R^n. $$ 
Hence $\Vert E^{\infty,\Delta_k} \Vert_{\R^{n\times n}} \leq \overline{c} \Vert E^{\infty} \Vert_{\R^{n\times n}}$ for every $k \in \N$ by (v) of Lemma~\ref{lemmatrice}. Thus the sequence~$(E^{\infty,\Delta_k})_{k \in \N}$ is bounded in $\R^{n\times n}$ and, up to a subsequence (that we do not relabel), converges to some~$L \in \R^{n\times n}$. In particular $\Vert L - E^\infty \Vert_{\R^{n\times n}} \geq \varepsilon$. Since $E^{\infty,\Delta_k} \in \S^n_{++} \subset \S^n_+$ for every~$k \in \N$, it is clear that~$L \in \S^n_+$. Moreover, by $\mathrm{(SD\text{-}ARE)}$ associated to $h_k$ (see Proposition~\ref{thmriccsampleinf}), we know that~$\FF(E^{\infty,\Delta_k},h_k) = 0_{\R^{n\times n}}$ for all $k \in \N$. By continuity of $\FF$ (see Lemma~\ref{lemF}), we conclude that~$\FF(L,0) = 0_{\R^{n\times n}}$. By uniqueness (see Proposition~\ref{thmriccperminf}) we deduce that $L = E^\infty$ which raises a contradiction with the inequality $\Vert L - E^\infty \Vert_{\R^{n\times n}} \geq \varepsilon$. The proof is complete.


\end{document}